\theoremstyle{plain}
\newtheorem{theorem}{Theorem}
\newtheorem{lemma}[theorem]{Lemma}
\theoremstyle{definition}
\theoremstyle{remark}
\newenvironment{algorithm-hbox}{\hbadness=10000\begin{algorithm}}{\end{algorithm}}
\newcommand{\nosemic}{\renewcommand{\@endalgocfline}{\relax}}% Drop semi-colon ;
\newcommand{\dosemic}{\renewcommand{\@endalgocfline}{\algocf@endline}}% Reinstate semi-colon ;
\let\oldnl\nl% Store \nl in \oldnl
\newcommand{\nonl}{\renewcommand{\nl}{\let\nl\oldnl}}% Remove line number for one line
\newenvironment{enumeratei}{\begin{enumorig}[label=\textup{(\roman*)}, noitemsep, topsep=1mm, labelindent=.5em, leftmargin=*]}{\end{enumorig}}
\DeclareMathOperator{\up}{UP}
\renewcommand{\leq}{\leqslant}
\renewcommand{\geq}{\geqslant}
\newcommand{\set}[1]{\left\{#1\right\}}
\newcommand{\norm}[1]{{\left|#1\right|}}
\newcommand{\epsi}{\varepsilon}
\newcommand{\Nat}{\mathbb{N}}
\newcommand{\calP}{\mathcal{P}}
\newcommand{\calT}{\mathcal{T}}
\newcommand{\calC}{\mathcal{C}}
\newcommand{\calS}{\mathcal{S}}
\newcommand{\DEF}{\em}  % use for definitions
\newcommand{\undefined}{\textrm{\tt undefined}}
\title{\bf Pathwidth and nonrepetitive list coloring}
\author{Adam G\k{a}gol\\
\small Theoretical Computer Science Department\\[-0.8ex]
\small Faculty of Mathematics and Computer Science\\[-0.8ex]
\small Jagiellonian University\\[-0.8ex] 
\small Krak\'{o}w, Poland\\
\small\tt gagol@tcs.uj.edu.pl\\
\and
Gwena\"{e}l Joret\thanks{G.\ Joret was supported by a  
DECRA Fellowship from the Australian Research Council.} \\
\small Computer Science Department\\[-0.8ex]
\small Universit\'e Libre de Bruxelles\\[-0.8ex] 
\small Brussels, Belgium \\
\small\tt gjoret@ulb.ac.be\\
\and
Jakub Kozik\thanks{J.\ Kozik and P.\ Micek were supported by 
the Polish National Science Center, grant 2011/01/D/ST1/04412.} \qquad  Piotr Micek\footnotemark[2] \\
\small Theoretical Computer Science Department\\[-0.8ex]
\small Faculty of Mathematics and Computer Science\\[-0.8ex]
\small Jagiellonian University\\[-0.8ex] 
\small Krak\'{o}w, Poland\\
\small\tt \{jkozik,micek\}@tcs.uj.edu.pl\\
}
\begin{document}

\maketitle

\begin{abstract}
A vertex coloring of a graph is {\em nonrepetitive} if there is no path in the graph 
whose first half receives the same sequence of colors as the second half. 
While every tree can be nonrepetitively colored with a 
bounded number of colors ($4$ colors is enough), 
Fiorenzi, Ochem, Ossona de Mendez, and Zhu recently showed that this does not 
extend to the list version of the problem, that is, for every $\ell \geq 1$ there 
is a tree that is not nonrepetitively $\ell$-choosable. 
In this paper we prove the following positive result, which complements the result of 
Fiorenzi {\it et al.}: There exists a function $f$ such that every tree of pathwidth $k$ is nonrepetitively $f(k)$-choosable. 
We also show that such a property is specific to trees by constructing a family of pathwidth-$2$ graphs that are not nonrepetitively $\ell$-choosable for any fixed $\ell$. 
\end{abstract}

\section{Introduction}

A \emph{repetition} of length $r$ ($r\geqslant 1$) in a sequence of symbols is a subsequence of consecutive terms of the form $x_1\ldots x_r x_1\ldots x_r$. 
A sequence is \emph{nonrepetitive} (or \emph{square-free}) if it does not contain a repetition of any length. 
In 1906 Thue proved that there exist arbitrarily long nonrepetitive sequences over an alphabet of size $3$ (see \cite{Ber95,Thu06}). 
The method discovered by Thue is constructive and uses substitutions over a given set of symbols.
 
A different approach to creating long nonrepetitive sequences 
was recently introduced by Grytczuk, Kozik, and Micek~\cite{GKM}: 
Generate a sequence by iteratively appending a random symbol at the end, and each time a repetition appears erase the repeated block. 
(For instance, if the sequence generated so far is $abcb$ and we add $c$, then we erase the last two symbols, bringing us back to $abc$.) 
By a simple counting argument one can prove that with positive probability the length of the constructed sequence eventually exceeds any finite bound, provided the alphabet has size at least $4$. 
This is one more than in Thue's result but the proof is more flexible and can be adapted to other settings. For instance, it led to a very short proof that for every $n\geq1$ and every sequence of sets $L_1,\ldots,L_n$, each of size at least $4$, there exists a nonrepetitive sequence $s_1s_2\ldots s_n$ where $s_i\in L_i$ 
for all $i$ (see \cite{GKM}), a theorem first proved 
by Grytczuk, Przyby{\l}o, and Zhu~\cite{GPZ} via an intricate application of the Lefthanded Local Lemma. 
Whether the analogous statement for lists of size $3$ is true remains an exciting open problem. 

In this paper we make use of the above-mentioned approach to color trees nonrepetitively. 
Given an (undirected, simple) graph $G$,  we denote by $V(G)$ and $E(G)$ its vertex set and edge set, respectively. 
A coloring $\phi:V(G)\to\Nat$ of the vertices of $G$ is \emph{nonrepetitive} if there is no repetition in the color sequence of any path in $G$; that is, $\phi$ is nonrepetitive if for every path $P$ with an even number of vertices  
the sequence of colors on the first half of $P$ is distinct from the sequence of 
colors on the second half of $P$. 
(We remark that all paths in this paper are simple, that is, 
contain no repeated vertex.)
The minimum number of colors used in a nonrepetitive coloring of $G$ is called the \emph{Thue chromatic number} of $G$ and is denoted by $\pi(G)$. 
Now, given a graph $G$, suppose that each vertex $v\in V(G)$ has a preassigned list of available colors $L_v \subset \mathbb{N}$. 
A coloring of $G$ with these lists is a coloring $\phi$ of $G$ such that $\phi(v)\in L_v$ for each vertex $v\in V(G)$. 
The \emph{Thue choice number} of $G$, denoted by $\pi_l(G)$, is the minimum $\ell$ such that, for every list assignment $\set{L_v}_{v\in V(G)}$  with $\norm{L_v}\geq \ell$ for each $v\in V(G)$, there is a nonrepetitive coloring of $G$ with these lists. 

Similarly as for many graph coloring parameters, the Thue chromatic (choice) number can be bounded 
from above by a function of the maximum degree: 
Alon, Grytczuk, Ha{\l}uszczak, and Riordan~\cite{AGHR02} proved that for every graph $G$ with maximum degree $\Delta$ we have $\pi(G) \leq \pi_l(G) \leq c \cdot \Delta^2$ for some absolute constant $c$. 
A number of subsequent works~\cite{DJKW, Grytczuk, Gry07b, HJ-DM11} focused on reducing the value of the constant $c$, 
the current best bound being $\pi_l(G) \leq (1+o(1)) \Delta^2$ (see~\cite{DJKW}). 
Alon {\it et al.}~\cite{AGHR02} also showed that there are graphs with maximum degree $\Delta$ with $\pi(G)=\Omega\left(\frac{\Delta^2}{\log\Delta}\right)$.  
(Whether this can be improved by a $\log\Delta$ factor remains an open problem.)

It is not difficult to show that every tree has Thue chromatic number at most $4$ (see \cite{BGKNP07}), which is best possible. 
This result was generalized to graphs of bounded treewidth by 
K{\"u}ndgen and Pelsmajer~\cite{KP08}. 
They proved that $\pi(G)\leq 4^k$ for every graph $G$ of treewidth $k$. 
It is not known whether this upper bound can be improved to a polynomial in $k$. 
However, if one considers graphs of pathwidth $k$ instead, a polynomial bound is known:  
It was shown by Dujmovi{\'c} {\it et al.}~\cite{DJKW} that $\pi(G)\leq 2k^2 +6k +1$ for every graph $G$ of pathwidth $k$. 
(We note that quadratic might not be the right order of magnitude here.) 
 
Probably the most intriguing open problem regarding the Thue chromatic number is whether it is bounded for all planar graphs,  
a question originally asked by Grytczuk~\cite{Gry07b}. A $O(\log n)$ upper bound is known~\cite{DFJW}, and from 
below Ochem constructed a planar graph requiring $11$ colors (see~\cite{DFJW}). 

The main focus of this paper is the list version of the parameter, the Thue choice number. 
As mentioned at the beginning of the introduction, we have $\pi_l(P)\leq 4$ for every path $P$, and it is open whether this bound can be improved to $3$.  
Fiorenzi, Ochem, Ossona de Mendez, and Zhu~\cite{FOOZ11} gave the first example of a class of graphs where the Thue chromatic and Thue choice numbers behave very differently: While trees have Thue chromatic number at most $4$, they showed that the Thue choice number of trees is unbounded. 
Clearly, trees with large Thue choice number must have large maximum degree, and 
in fact one can deduce from the proof in~\cite{FOOZ11} that there are trees with maximum degree $\Delta$ and Thue choice number 
$\Omega(\frac{\log\Delta}{\log \log \Delta})$.  
Kozik and Micek~\cite{KM13} subsequently showed that  
a better-than-quadratic upper bound in terms of the maximum degree exists for trees:  For every $\epsi>0$ there exists $c > 0$ such that $\pi_l(T)\leq c\Delta^{1+\epsi}$ for every tree $T$ of maximum degree $\Delta$. (Bridging the significant gap between the upper and lower bounds remains an open problem.)

Note that graphs of bounded treewidth have unbounded Thue choice number since this is 
already the case for trees. 
On the other hand, Dujmovi{\'c} {\it et al.}~\cite{DJKW} observed that $\pi_l(G)$ is bounded when $G$ is a graph of pathwidth $1$. 
This prompted the authors of~\cite{DJKW} to ask whether $\pi_l(G)$ is bounded more generally when $G$ has bounded pathwidth (which is the case for the Thue chromatic number).  
Also, since connected graphs $G$ of pathwidth $1$ are caterpillars, and thus trees in particular, they also asked the same question but with $G$ moreover required to be a tree.    
A second motivation for the latter question was that the trees with arbitrarily large Thue choice number constructed by Fiorenzi {\it et al.}~\cite{FOOZ11} also have unbounded pathwidth. 

In this paper we answer both questions.
First, we give a simple construction showing that the Thue choice number is unbounded for graphs of bounded pathwidth; in fact, this is true even for graphs of pathwidth $2$ 
(which is best possible as noted above):

\begin{theorem}\label{thm:pw2}
For every $\ell\geq1$, there is a graph $G$ of pathwidth $2$ with $\pi_l(G)\geq \ell$.
\end{theorem}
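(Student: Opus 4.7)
For every $\ell\geq 1$, we aim to construct an explicit pathwidth-$2$ graph $G_\ell$ together with a list assignment $L$ with $\norm{L_v}=\ell-1$ for every vertex $v$ such that $G_\ell$ admits no nonrepetitive coloring with respect to~$L$. The natural template for $G_\ell$ is a long chain of copies of $K_{2,n}$ (a ``book''): formally, take vertex set $\set{a_0,\ldots,a_L}\cup\set{v_i^j: 0\leq i<L,\,1\leq j\leq n}$, with each $v_i^j$ adjacent to exactly $a_i$ and $a_{i+1}$, so that each of the $L$ blocks between two consecutive hubs $a_i,a_{i+1}$ is a copy of $K_{2,n}$. The path decomposition with bags of the form $\{a_i,v_i^j,a_{i+1}\}$ (sliding $j$ inside each block, then sliding to the next block by swapping $a_i$ for $a_{i+2}$) has width~$2$.

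The relevant nonrepetitive constraints on this graph fall into two useful families. The length-$4$ paths of the form $a_i\,v_i^j\,a_{i+1}\,v_i^k$ within a single block give, for every $j\neq k$, the constraint $\phi(a_i)\neq\phi(a_{i+1})$ or $\phi(v_i^j)\neq\phi(v_i^k)$; taking $n$ large relative to the list sizes essentially forces the hubs to be properly colored and each middle vertex to take one of only very few admissible values. The length-$6$ zigzag paths $a_i\,v_i^j\,a_{i+1}\,v_{i+1}^k\,a_{i+2}\,v_{i+2}^\ell$ then forbid certain period-$3$ patterns relating three consecutive blocks. The plan is to choose the lists adversarially, in the spirit of the classical Erd\H{o}s--Rubin--Taylor bad instances for the list chromatic number of complete bipartite graphs, so that the length-$4$ constraints restrict each middle vertex color to (essentially) be a function of the neighbouring pair of hub colors, and the length-$6$ constraints then rule out every globally consistent choice of hub colors through a block-by-block inductive argument.

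The main obstacle is the last step: verifying that no nonrepetitive coloring actually exists. A naive choice of uniform lists does \emph{not} suffice, since by Thue's theorem any nonrepetitive word over $\ell-1\geq 3$ symbols on the hubs can be promoted to a valid nonrepetitive coloring of the whole chain. The construction must therefore use carefully non-uniform lists, tailored so that the set of hub colorings extendible to a nonrepetitive coloring of the prefix $G_\ell[a_0,\ldots,a_i]$ strictly shrinks with~$i$ and eventually becomes empty. Designing these lists and tracking this shrinkage by a combinatorial (pigeonhole or potential-function) argument along the chain is where the bulk of the work will lie; if necessary, the construction can be augmented with additional pathwidth-$2$ gadgets hanging off each block to kill any remaining ``escape'' colorings without increasing pathwidth.
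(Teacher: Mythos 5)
Your construction template matches the paper's: the graph in the paper is exactly a chain of complete bipartite blocks $K_{2,m}$ glued at single hub vertices (in the paper the hubs are the odd-indexed vertices $v_{2i-1}$ and each middle independent set has $m=\binom{\ell n}{\ell}$ vertices), with the same pathwidth-$2$ decomposition. So the choice of $G$ is correct. However, the rest of the proposal is a plan with the hard part left open, and the approach you sketch for that hard part is unlikely to go through as stated.

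Specifically, you acknowledge that after the length-$4$ constraints force a proper hub coloring, Thue's theorem lets any nonrepetitive $3$-coloring of the hub path extend unless the lists are chosen adversarially, and you defer the design of those lists and the non-extendibility argument to future work. This is precisely the content of the theorem; without it there is no proof. Moreover, the route you propose --- restricting attention to length-$4$ and length-$6$ paths and then running a block-by-block "shrinking set of extendible hub colorings" induction --- is not what happens in the paper and is unlikely to suffice: short repetitions alone give only $O(n)$ local constraints and cannot rule out the $\Omega(3^n)$ nonrepetitive hub words, so no potential-function argument over a constant-length window can close the gap. The paper instead exploits repetitions of \emph{every} length $2k+1$ up to $\Theta(n)$: it gives the hubs pairwise-disjoint lists of size $\ell$ (so there is nothing to shrink on the hub side) and gives the middle independent set in each block one copy of \emph{every} $\ell$-subset of $[\ell n]$; the crucial consequence is that the set of colors actually used on any such block misses at most $\ell-1$ colors of $[\ell n]$. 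Then, for each interval $[a,a+4k+1]$, nonrepetitivity forces a ``witness'' pair $(p,q)$, and a double count shows there are at least $n\ln n - 3n$ witnesses while the missing-colors bound caps them at $n(\ell-1)$, a contradiction for $n>e^{\ell+2}$. None of these ingredients --- disjoint hub lists, the full family of $\ell$-subsets, the ``at most $\ell-1$ colors missed'' observation, or the global counting of witnesses over all repetition lengths --- appears in your sketch, so there is a genuine gap rather than merely an alternative route.
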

 
Next, we address the case of trees and prove that their Thue choice number is bounded from above by a function of their pathwidth:

\begin{theorem}\label{thm:tree} 
There is a function $b:\Nat \to \Nat$ such that $\pi_l(T)\leq b(k)$ for every tree $T$ of pathwidth $k$.
\end{theorem}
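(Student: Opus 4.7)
The plan is to prove this by induction on the pathwidth $k$, combined with a randomized coloring procedure in the style of Grytczuk, Kozik, and Micek~\cite{GKM}. The structural tool is the standard recursive characterization of pathwidth for trees: a tree $T$ of pathwidth $k$ contains a path $P$, called its \emph{spine}, such that every connected component of $T-V(P)$ has pathwidth at most $k-1$. Iterating, $T$ decomposes into a nested hierarchy of spines of depth at most $k$. The base case is the caterpillar ($k=1$), for which the result follows from the bound of Dujmovi{\'c} {\it et al.}~\cite{DJKW} on $\pi_l$ for graphs of pathwidth $1$.

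Given a tree $T$ of pathwidth $k$ with lists $L_v$ of size $b(k)$, the algorithm would color the vertices one at a time in an order that respects the spine hierarchy: walk along the outermost spine, and upon encountering a subtree hanging off this spine recurse into that subtree (applying the same rule to its own spine) before continuing. At each step a color is sampled uniformly at random from the list of the current vertex. If the resulting coloring contains a repetition along some path ending at this vertex, we erase the colors of the ``second half'' of that repetition and restart from the first uncolored vertex in the processing order. To show that this procedure terminates with positive probability---thereby producing a nonrepetitive list coloring---I would run a standard entropy compression analysis: the execution is encoded by the random bits drawn together with an ``undo trace'' recording which repetitions were encountered, and for $b(k)$ sufficiently large this encoding is strictly shorter than the random string, forcing the process to halt.

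The principal obstacle is the entropy analysis itself. The processing order and the undo rule must be chosen so that after each undo the set of colored vertices remains an initial segment of the order; here the nested spine hierarchy plays a crucial role, because it forces the second half of any newly-created repetition to lie in a predictable region above the most recently colored vertex, allowing its erasure without breaking earlier coloring work. The encoding length of an undo record must then be bounded in terms of the possible ``path shapes'' in $T$, a quantity ultimately governed by $k$ and not by $|V(T)|$. This analysis is what drives the growth of $b(k)$, which I would expect to be at least exponential in~$k$.
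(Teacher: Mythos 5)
Your structural starting point is essentially the same as the paper's: the recursive spine decomposition of a bounded-pathwidth tree is what the paper formalizes as a \emph{path-partition} of bounded height (Lemma~\ref{lemma:representation}). But the paper does \emph{not} run entropy compression directly on the coloring, and the reason it doesn't is exactly where your proposal has a gap.

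Consider what happens when the randomized coloring procedure detects a repetition that ``crosses'' a spine: the path climbs from some subtree $D_a$ hanging off spine vertex $p_a$, runs along the spine to $p_b$ (with $a<b$), and descends into a later subtree $D_b$. In your depth-first processing order, by the time the last vertex of this path is colored (somewhere inside $D_b$), the intermediate subtrees $D_{a+1},\dots,D_{b-1}$ have all been fully colored. The ``second half'' of the repetition contains some of the spine vertices $p_{a+1},\dots,p_b$, but not the subtrees below them. Erasing that second half therefore leaves the set of colored vertices in a state that is \emph{not} an initial segment of the processing order: you have uncolored spine vertices sitting below fully colored subtrees. Re-sampling $p_{a+1}$ can then create a fresh repetition entirely inside the already-colored $D_{a+1}$, which your undo rule (which only ever erases suffixes along a path ending at the current vertex) cannot handle without further erasures that cascade unboundedly. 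So the assertion that the spine hierarchy ``forces the second half of any newly-created repetition to lie in a predictable region above the most recently colored vertex'' is false precisely for these crossing paths, and the entropy-compression bookkeeping does not close.

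The paper's workaround is a two-stage argument. Stage one (Lemmas~\ref{lemma:more-technical} and~\ref{lemma:good}) uses entropy compression not to choose colors but to choose \emph{sublists} $S_v \subseteq L_v$ of size $2k+1$, and it only targets a restricted family of \emph{ascending} paths (those that start on the base and only move up or sideways, never down again) for which the suffix-erasure invariant does hold. Stage two then produces the actual coloring from those sublists \emph{deterministically}: vertices are colored in order of increasing level, and when coloring $v$ one forbids the colors already used on the at most $2k$ ``guard'' vertices on the downward path from $v$ to the root-path. Any crossing path has a turning point on the base whose color is excluded by the guard rule from the would-be repeated vertex, which kills such repetitions outright; what remains are near-repetitions along ascending paths, and those are excluded by the stage-one sublists. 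Splitting the problem this way is what makes the encoding finite and independent of $|V(T)|$; your proposed single-stage scheme has no analogous mechanism for the crossing paths.
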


The proof of Theorem~\ref{thm:tree} combines an induction on the pathwidth with the algorithmic method of Grytczuk {\it et al.}~\cite{GKM} to produce arbitrarily long nonrepetitive sequences described at the beginning of the introduction. 
This method, which finds its roots in the celebrated algorithmic proof of the Local Lemma by Moser and Tardos~\cite{MT10}, was extended to produce nonrepetitive colorings of graphs (in~\cite{DJKW}) and trees (in~\cite{KM13}). 
Part of our proof consists in adapting the ideas from~\cite{DJKW, KM13} to the situation under consideration. 

We note that the bounding function $b(k)$ in Theorem~\ref{thm:tree} stemming from our proof is quite large, it is doubly exponential in $k$. 

The paper is organized as follows: In Section~\ref{sec:definitions} we introduce definitions and terminology. 
Then we prove Theorem~\ref{thm:pw2} in Section~\ref{sec:pw2}, and Theorem~\ref{thm:tree} in Section~\ref{sec:trees}. 

\section{Definitions}
\label{sec:definitions}
For an integer $n\geq1$, we let $[n] := \set{1,\ldots,n}$. 
Also, given two integers $a, b$ with $a \leq b$ we let $[a,b]:=\set{a,a+1,\dots, b}$, which we call an {\DEF interval}. 

Graphs in this paper are finite, simple, and undirected. 
The vertex set and edge set of a graph $G$ are denoted $V(G)$ and $E(G)$, respectively. 
Note that, since only simple graphs are considered, resulting loops and parallel edges are removed 
when contracting edges in a graph.  
A graph $H$ is a {\DEF minor} of a graph $G$ if $H$ can be obtained from a subgraph of $G$ by contracting edges. 

A {\DEF tree decomposition} of a graph $G$ is a pair $(T, \mathcal{C})$ where 
$T$ is a tree and $\mathcal{C}$ is a collection $\{T_v:  v\in V(G)\}$    
of non-empty subtrees of $T$ such that $V(T_u) \cap V(T_v) \neq \emptyset$ for every edge $uv \in E(G)$. 
The {\DEF width} of the tree decomposition $(T, \mathcal{C})$ is the maximum, over every $x\in V(T)$, 
of the number of subtrees in $\mathcal{C}$ containing $x$, minus $1$.  
The {\DEF treewidth} of $G$ is the minimum width of a tree decomposition of $G$. 
{\DEF Path decompositions} and {\DEF pathwidth} are defined analogously with the tree $T$ required instead to be a path. 
Treewidth and pathwidth are minor-closed parameters, in the sense that every minor of a graph $G$ has treewidth 
(pathwidth) at most that of $G$. 
We refer the reader to Diestel's textbook~\cite{Diestel} for an introduction to the theory of treewidth and graph minors. 

The {\DEF length} of a path is the number of its edges.
The {\DEF height} of a rooted tree $T$ is the maximum length of  a path from the root to a leaf of $T$. 
(Thus $T$ has height $0$ if it consists of a unique vertex.)  
The {\DEF height} of a vertex $v$ of $T$ is the length of the path from the root to $v$ in $T$. 

\section{Graphs of pathwidth 2}
\label{sec:pw2}

Let $G_{n,\ell}$ be the graph constructed from the path on $2n$ vertices where every second vertex is blown up to $\binom{\ell n}{\ell}$ vertices forming 
an independent set.
Formally, \[V(G_{n,\ell})=\set{v_{2i-1}\mid i\in[n]} \cup \set{v_{2i}^j\mid i\in[n],\ j\in\left[\binom{\ell n}{\ell}\right]},\]  
and two vertices are adjacent in $G_{n,\ell}$ if and only if their lower indices differ by exactly $1$.
Also, let $V_{i}:= \set{v_{i}}$ for each odd index $i\in [2n]$ and  
$V_{i}:= \set{v_{i}^j\mid j\in\left[\binom{\ell n}{\ell}\right]}$ 
for each even index $i\in [2n]$.   

It is not difficult to check that $G_{n,\ell}$ has pathwidth at most $2$ (with equality for $n \geq 2$ and $\ell\geq1$). 
Thus Theorem~\ref{thm:pw2} follows from the following theorem. 

\begin{theorem}
Let $\ell$ and $n$ be integers such that $\ell\geq1$ and $n >  e^{\ell+2}$. 
Then $\pi_l(G_{n,\ell})> \ell$. 
\end{theorem}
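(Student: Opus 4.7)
My plan is to exhibit a list assignment on $G_{n,\ell}$ where each list has size $\ell$, yet no nonrepetitive coloring exists. Set the palette $C := [\ell n]$. Give the singletons pairwise disjoint $\ell$-element lists partitioning $C$: $L_{v_{2i-1}} := \{(i-1)\ell+1, \ldots, i\ell\}$. Identify the $\binom{\ell n}{\ell}$ vertices of each blow-up class $V_{2i}$ with the $\ell$-subsets of $C$, writing $V_{2i} = \{v_{2i}^S : S \subseteq C,\ |S|=\ell\}$, and let $L_{v_{2i}^S} := S$.

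Suppose for contradiction that a nonrepetitive coloring $\phi$ exists. Set $x_i := \phi(v_{2i-1})$, $Y_i := \phi(V_{2i})$, and $B_i := C \setminus Y_i$. The disjoint singleton lists force the $x_i$'s to be pairwise distinct. A pigeonhole density lemma gives $|B_i| \leq \ell - 1$ for every $i$: otherwise one could choose $S \subseteq B_i$ with $|S|=\ell$, and the vertex $v_{2i}^S$ would have to take a color in $S \subseteq C \setminus Y_i$, contradicting $\phi(v_{2i}^S) \in Y_i$.

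A length-$r$ repetition lies along an alternating (singleton, blow-up) path of $2r$ vertices. When $r$ is even, the singleton positions in the first half align with singleton positions in the second half, so the two corresponding distinct singletons would receive equal colors -- impossible by the disjointness of the $x_i$'s. Hence only odd-length repetitions matter. A repetition of odd length $2m+1$ along a $(4m+2)$-vertex path yields $2m+1$ cross-conditions of the form $x_j \in Y_k$ for explicit pairs $(j,k)$ determined by the path's starting position, orientation, and length. Nonrepetition everywhere therefore forces, for each starting index and each repetition type, the occurrence of at least one \emph{hole event} $x_j \in B_k$.

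The quantitative step compares the total number of forced hole events to the budget $\sum_k |B_k \cap \{x_1, \ldots, x_n\}| \leq \sum_k |B_k| \leq n(\ell - 1)$ provided by the density lemma. A careful enumeration of the repetition constraints across all starting indices and all relevant odd lengths -- likely combined via a union bound or a probabilistic/entropy estimate over the singleton coloring $(x_1, \ldots, x_n)$ -- should show that these constraints simultaneously require more hole events than the budget permits once $n > e^{\ell+2}$, yielding the contradiction. The main obstacle is making this counting precise: neither length-$3$ repetitions alone nor the density bound alone suffice, and matching the exponential threshold $e^{\ell+2}$ will require a delicate combinatorial estimate that combines hole events across multiple odd repetition lengths.
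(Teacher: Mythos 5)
Your construction and reduction are identical to the paper's: same pairwise-disjoint lists on the singletons, same ``all $\ell$-subsets'' lists on the blow-up classes, the same observation that $|B_i|\leq\ell-1$, the same elimination of even-length repetitions via parity, and the same notion (your ``hole events'' are the paper's ``witnesses'') with the same budget $n(\ell-1)$. However, you stop exactly at the point where the real work happens: you acknowledge that ``a careful enumeration\ldots should show'' the hole events exceed the budget, and even suggest this ``likely'' needs a union bound or an entropy estimate. That is the gap, and the step you are deferring is in fact an elementary deterministic double-count, not a probabilistic one.

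The missing idea is this. Fix an odd repetition length $2k+1$ and consider the intervals $I=[a,a+4k+1]\subseteq[2n]$. There are $2n-4k-1$ such intervals, and every one must contain a hole event, i.e.\ a pair $(p,q)$ with $p$ odd, $q$ even, $|p-q|=2k+1$, and $x_p\notin Y_q$. The crucial observation is that a \emph{fixed} pair $(p,q)$ with $|p-q|=2k+1$ can serve as the hole event for at most $2k+1$ of these intervals (those $a$ with $\min(p,q)-2k\leq a\leq\min(p,q)$). Hence the number of distinct hole events at distance $2k+1$ is at least $\frac{2n-4k-1}{2k+1}$. Hole events at distinct distances are distinct, so summing over $k=0,\dots,\lfloor(n-1)/2\rfloor$ gives a harmonic-type sum
\[
\sum_{k\geq 0}\frac{2n-4k-1}{2k+1}\;\geq\; n\ln n - 3n,
\]
which exceeds the budget $n(\ell-1)$ precisely once $n>e^{\ell+2}$. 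This explains where the exponential threshold comes from. Without the multiplicity bound ``each hole event covers at most $2k+1$ intervals'' and the resulting harmonic sum, the proposal does not close; with it, your argument becomes the paper's argument verbatim.
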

\begin{proof}
Consider the following list assignment for the vertices of $G_{n,\ell}$. 
For each odd index $i=2t+1\in [2n]$, vertex $v_{i}$ is assigned the list 
\[
L_{i} := \{t\ell + 1, t\ell + 2, \dots, t \ell + \ell\}. 
\]
Thus these $n$ lists have size $\ell$, are pairwise disjoint, and their union is $[\ell n]$. 
Next, enumerate the $\ell$-subsets of $[\ell n]$ in an arbitrary way. 
Then, for each even index $i\in[2n]$ and index $j\in \left[\binom{\ell n}{\ell}\right]$, 
vertex $v_{i}^j$ is assigned the list which is the $j$-th set in that enumeration. 

We claim that, because $n$ was chosen to be strictly larger than $e^{\ell+2}$, there cannot be a nonrepetitive coloring of $G_{n,\ell}$ with these lists. 
Arguing by contradiction, let us suppose that $\phi$ is such a coloring. 

With a slight abuse of notation, for $i\in [2n]$ we use the shorthand $\phi(V_{i})$ for the set $\cup_{u \in V_{i}} \set{\phi(u)}$. 
Consider an interval $I\subseteq[2n]$ of the form $I=[a,a+4k+1]$ with $k \geq 0$. 
Suppose that the following two conditions are satisfied: 
\[
\begin{array}{ll}
\phi(v_{i}) \in \phi(V_{i+2k+1}) &  \text{for each } i \in [a,a+2k], i \text{ odd} \\[1ex]
\phi(v_{i+2k+1}) \in \phi(V_{i}) &  \text{for each } i \in [a,a+2k], i \text{ even.} 
\end{array}
\]
Then it is easy to check that there exists a path $w_{a},\ldots, w_{a+4k+1}$ in $G_{n,\ell}$ with $w_j=v_j$ for $j$ odd and $w_j\in V_j$ for $j$ even, for all $j\in I$, such that the color sequence $\phi(w_a),\ldots,\phi(w_{a+4k+1})$ is a repetition (of size $2k+1$).
Since this cannot happen, it follows that there exists an index $i \in [a,a+2k]$ for which one of the above two conditions is not satisfied. 
For every such index $i$, we say that the pair $(p, q)$ is a {\DEF witness} (for interval $I$), where $\{p, q\}=\{i, i+2k+1\}$ with $p$ odd and $q$ even.   

Next, consider an even index $q\in [2n]$. 
Observe that $\left| [\ell n] - \phi(V_{q})\right| \leq \ell -1$, since $\phi(V_{q})$ contains at least one color from each $\ell$-subset of $[\ell n]$. 
Combining this with the fact that vertices $v_{p}$ with odd index $p\in [2n]$ have pairwise disjoint lists, we deduce that there are at most $\ell-1$ odd indices $p\in [2n]$ such that the pair $(p, q)$ is a witness. 
Summing up over every even index $q\in [2n]$, it follows that there are at most 
\[
n (\ell-1)
\]
distinct witnesses in total. 

Now consider a witness $(p, q)$ and let $|p-q| = 2k+1$. 
The pair $(p, q)$ is a witness for at most $2k+1$ intervals $I\subseteq[2n]$ of the form $I=[a,a+4k+1]$. 
Since there are exactly $2n-4k-1$ intervals $I$ of the latter form 
and each interval of that form must have a witness, 
it follows that the number of witnesses $(p, q)$ with $|p-q| = 2k+1$ is at least $\frac{2n-4k-1}{2k+1}$. 
Summing up over every possible value of $k$ (that is, $k=0, 1, \dots, \lfloor (n-1)/2\rfloor$), we obtain that the total number of witnesses is at least 
\begin{align*}
\sum_{k=0}^{\lfloor (n-1)/2\rfloor} \frac{2n-4k-1}{2k+1}
&\geq \sum_{k=0}^{\lfloor (n-1)/2\rfloor} \left(\frac{n}{k+1} -2\right)\\
&\geq n\left(\sum_{k=1}^{\lfloor (n+1)/2\rfloor} \frac{1}{k}\right) - 2\left\lfloor \frac{n+1}{2}\right\rfloor\\
&\geq n\ln\left(\left\lfloor\frac{n+1}{2}\right\rfloor\right)-(n+1)\\
&\geq n\ln\left(\frac{n}{2}\right) -(n+1) \\
&\geq n\ln n -3n.
\end{align*}
It follows that  
\[
 n(\ell-1) \geq n\ln n -3n,
\]
which contradicts the assumption that $n > e^{\ell+2}$.
\end{proof}

\section{Trees of bounded pathwidth}
\label{sec:trees}

A {\DEF path-partition} of a tree $T$ is a pair $(\calT, \calP)$ where $\calT$ is 
a rooted  tree and $\calP$ is a collection $\{ P_x: x\in V(\calT)\}$ of vertex-disjoint paths of $T$ 
which collectively partition the  vertex set of $T$ and 
such that $xy \in E(\calT)$ if and only if there is an edge between a vertex from $P_x$ and a vertex from $P_y$ in $T$. 
Observe that a consequence of this definition is that $\calT$ is a minor of $T$. 
The {\DEF root-path} of $(\calT, \calP)$ is the path $P_{x}$ where $x$ is the root of $\calT$. 
Now, consider a path  $P_{x}$ with $x$ distinct from the root.  
The path $P_{x}$ has a {\DEF center}, defined as the endpoint in $P_x$ of the edge in $T$ linking $P_x$ to 
$P_y$ where $y$ is the parent of $x$ in $\calT$. 
The {\DEF height} of the path-partition $(\calT, \calP)$ is the height of $\calT$. 

When considering a path-partition $(\calT, \calP)$ of a tree $T$, it will be useful 
to embed $T$ itself in the plane in a way that is `faithful' to the path-partition. 
This leads to the following definition: An embedding of $T$ in the plane 
is {\DEF faithful} to the path-partition  $(\calT, \calP)$ if 
each path in $\calP$ is drawn horizontally, and contracting each such path into one of its vertices 
we obtain some plane embedding of $\calT$, with its root drawn at the bottom and its edges going up. 
See Figure~\ref{fig:representation} for an illustration. 
\begin{figure}
\includegraphics[width=0.93\textwidth]{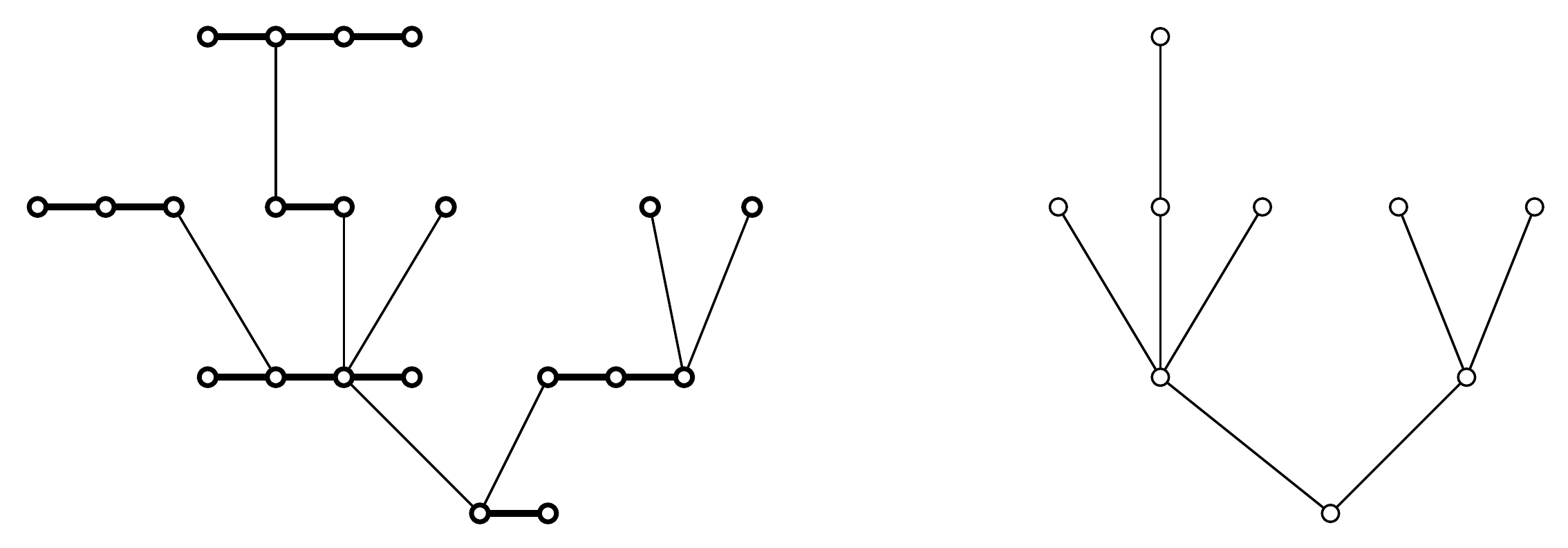}
\caption{\label{fig:representation}Left: A tree $T$ faithfully embedded according to some path-partition $(\calT, \calP)$ (the paths in $\calP$ are drawn in bold). 
Right: The rooted tree $\calT$. 
}
\end{figure}
As the paths in $\calP$ are drawn horizontally, they 
have a natural orientation from left to right. 
Every edge $e$ of $T$ is either {\DEF horizontal} or {\DEF vertical}, depending 
on whether $e$ belongs to some path in $\calP$ or not.  

Our motivation for considering path-partitions is the following lemma. 

\begin{lemma}\label{lemma:representation}
Every tree of pathwidth $k$ has a path-partition of height at most $2k$. 
\end{lemma}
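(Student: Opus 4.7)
Proof proposal.

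The plan is to prove the lemma by induction on $k$, via the stronger statement:
\begin{quote}
\textbf{(*)} \emph{For every tree $T$ of pathwidth at most $k$ and every vertex $v \in V(T)$, there is a path-partition $(\calT, \calP)$ of $T$ of height at most $2k$ such that $v$ lies on the root-path.}
\end{quote}
The lemma is the special case obtained by forgetting~$v$, and the base case $k=0$ is trivial since $T$ is then a single vertex.

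The inductive step rests on the following two-path structural claim, which I view as the technical core of the proof: \emph{given a tree $T$ with pathwidth $k \geq 1$ and $v \in V(T)$, there exist a path $P_0 \subseteq T$ containing $v$ and, for each connected component $C$ of $T - V(P_0)$, a path $Q_C \subseteq C$ containing the unique vertex $u_C \in V(C)$ adjacent in $T$ to $V(P_0)$, such that every connected component of $T - V(P_0) - \bigcup_C V(Q_C)$ has pathwidth at most $k - 1$.} Granted this, the induction goes through smoothly: use $P_0$ as the root-path of $(\calT, \calP)$ and each $Q_C$ as a child of the root in $\calT$; for each residual component $C'$ (whose attachment vertex $u_{C'}$ is well-defined since $T$ is a tree), apply~(*) inductively to $(C', u_{C'})$ to obtain a sub-partition of $C'$ of height at most $2(k-1)$ with $u_{C'}$ on its root-path, and attach it as a child subtree of the appropriate $Q_C$ node. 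The resulting path-partition of $T$ has height at most $2 + 2(k-1) = 2k$, and $v$ lies on its root-path.

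The main obstacle is the structural claim. The simpler one-path version --- just $P_0$, no $Q_C$'s --- is in fact \emph{false}: for instance, if $v$ is a pendant leaf whose only neighbor has two attached subtrees each of pathwidth $k$, then no single path through $v$ can simultaneously reduce both of those subtrees below pathwidth $k$. The two-path formulation circumvents this by letting $P_0$ pick one side and delegating the remaining heavy components to the $Q_C$'s. To prove the claim, my plan is to start from an optimal path decomposition $(B_1, \ldots, B_m)$ of $T$ of width $k$, build $P_0$ by extending $v$ greedily through bags toward both ends of the decomposition (at each step following a tree-edge to a neighbor whose lifetime interval reaches furthest in the intended direction), and then construct each $Q_C$ by the analogous greedy extension starting from $u_C$ inside the restriction of the decomposition to $C$. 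Verifying that the removal of $V(P_0) \cup \bigcup_C V(Q_C)$ genuinely reduces the pathwidth of every residual sub-component by one requires a careful case analysis of how the chosen paths interact with the branching bags of the decomposition, and this is where I expect the bulk of the technical work to lie.
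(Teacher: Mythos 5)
Your induction skeleton --- strengthening the statement so that a designated vertex $v$ lies on the root-path, and inducting on the pathwidth --- matches the paper exactly, as does the plan of carving off paths to drop the pathwidth by one and recursing, spending two extra levels per inductive step to get height $2k$. You have also correctly identified the key difficulty: a single path through $v$ cannot in general reduce the pathwidth, so a second layer of paths is needed.

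Where you diverge from the paper, and where your proposal has a real gap, is in how that second layer is chosen. The paper does not use any greedy extension. Instead it fixes a width-$k$ path decomposition $B_1,\ldots,B_n$ of $T$, picks a vertex $x$ with $p_1\in V(T_x)$ and a vertex $y$ with $p_n\in V(T_y)$, sets $Q_1$ to be the tree path from $x$ to the designated vertex $u$, lets $z$ be the vertex of $Q_1$ nearest to $y$, and sets $Q_2$ to be the tree path from $y$ to $z$. The crucial observation is that $Q_1\cup Q_2$ contains the unique tree path from $x$ to $y$; consecutive vertices on that path are adjacent in $T$ and hence have overlapping bag intervals, and these intervals reach $1$ (through $x$) and $n$ (through $y$), so $V(Q_1)\cup V(Q_2)$ meets every bag. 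Removing these vertices therefore leaves every residual bag of size at most $k$, so every component of $T-V(Q_1)-V(Q_2)$ has pathwidth at most $k-1$. Only \emph{one} subsidiary path $Q_2$ is required, and it is anchored at the last bag of the decomposition, not at an attachment vertex of a component.

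Your structural claim instead asks for a path $Q_C$ through the attachment vertex $u_C$ of \emph{every} component $C$ of $T-V(P_0)$, and needs $V(P_0)\cup\bigcup_C V(Q_C)$ to hit every full bag. The difficulty you yourself flag as ``the bulk of the technical work'' is genuine and I do not see how the greedy extension overcomes it. The attachment vertex $u_C$ is the unique vertex of $C$ adjacent to $P_0$, and nothing forces it to sit near an extreme of $C$'s bag range $[c,d]$: the tree path inside $C$ joining a $B_c$-vertex to a $B_d$-vertex may avoid $u_C$ altogether, in which case no single path through $u_C$ covers all of $C$'s bags, and some residual component of $C$ can retain a size-$(k+1)$ bag. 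Repairing this by adding a further path inside $C$ (the natural fix) costs a third level and pushes the height to $2k+1$. The paper's construction sidesteps this precisely by not forcing its second path through a prescribed attachment vertex; $Q_2$ is anchored at $y\in V(T_{p_n})$ and allowed to join $Q_1$ wherever the tree dictates. I would abandon the greedy, one-$Q_C$-per-component plan and adopt the endpoint-anchored construction.

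(A small remark on the paper's own writeup, since you may look it up: $Q_2$ as literally defined shares the vertex $z$ with $Q_1$; for vertex-disjointness of the path-partition one should take $Q_2$ to terminate at the neighbor of $z$ on the $y$-side rather than at $z$ itself. This does not affect the covering argument.)
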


\begin{proof}
We prove the following stronger statement: 
For every tree $T$ of pathwidth $k$ and every vertex $u\in V(T)$, there is a path-partition of $T$ of height at most $2k$ with $u$ in the root-path.  

The proof is by induction on $k$. 
For $k=0$, the tree $T$ consists of the single vertex $u$.
Clearly, it has a path-partition of height $0$ with $u$ in the root-path. 
Now suppose $k>0$ for the inductive case. 
Let $(P,\calC)$ be a path decomposition of $T$ of width $k$, where $T_{v}\in\calC$ denotes the path associated to vertex $v\in V(T)$. 
Enumerate the vertices of the path $P$  indexing the path decomposition as $p_1,\ldots,p_n$, in order.
We may assume without loss of generality that there are (non-necessarily distinct) vertices $x,y\in V(T)$ such that $p_1\in V(T_{x})$ and $p_n\in V(T_{y})$ (otherwise $P$ could be shortened).
Let $Q_1=v_1v_2\ldots v_m$ denote the unique path in $T$ between $x=v_1$ and $u=v_m$ in $T$. 
Let $z$ be the vertex of $Q_1$ that is closest to $y$ in $T$. 
Let $Q_2=w_1w_2\ldots w_{m’}$ denote the unique path in $T$ between $y=w_1$ and $z=w_{m’}$ in $T$.
Notice that $V(T_{v_i})\cap V(T_{v_{i+1}})\neq\emptyset$ for each $i\in\set{1,\ldots,m-1}$ and that each $V(T_{v_i})$ induces a subpath of $P$. 
Similarly, $V(T_{w_i})\cap V(T_{w_{i+1}})\neq\emptyset$ for each $i\in\set{1,\ldots,m’-1}$ and each $V(T_{w_i})$ induces a subpath of $P$. 
Thus we deduce that $\bigcup_{i=1}^m V(T_{v_i}) \cup \bigcup_{i=1}^{m’} V(T_{w_i}) =\set{p_1,\ldots,p_n}$.

Consider the forest $T-(V(Q_1) \cup V(Q_2))$ and let $D_1,\ldots,D_c$ denote its components.
Observe that each tree $D_j$ ($j\in\set{1,\ldots,c}$) has pathwidth at most $k-1$.
Indeed, $(P,\set{T_v\mid v\in V(D_j)})$ is a path decomposition of $D_j$, and for each $i \in\{1, \dots, n\}$  we have $\norm{\set{v\in V(D_j)\mid p_{i}\in V(T_v)}}\leq \norm{\set{v\in V(T)\mid p_{i}\in V(T_v)}}-1\leq k-1$.

For each $j\in\set{1,\ldots,c}$, let $d_j$ denote the unique vertex of $D_j$ having a neighbor in $Q_1 \cup Q_2$ in $T$. 
By induction, each tree $D_j$ ($j\in\set{1,\ldots,c}$) has a path-partition $(\calT_j,\calP_j)$ of height at most $2(k-1)$ such that $d_j$ in the root-path. Let $r_j$ denote the root of $\calT_j$. 

We construct a path-partition $(\calT,\calP)$ of $T$ as follows: 
$\calT$ consists of the disjoint union of $\calT_1, \dots, \calT_c$ plus two extra vertices $q_1$ and $q_2$ with $q_1$ the root of $\calT$ and $q_2$ a child of $q_1$. 
The paths associated to $q_1$ and $q_2$ are $Q_1$ and $Q_2$, respectively. 
For each $j \in \{1,\dots,c\}$, we make $q_1$ or $q_2$ adjacent to $r_j$, depending whether $d_j$ has a neighbor in $Q_1$ or $Q_2$ in $T$.    
It is easy to verify that $(\calT,\calP)$ is a path-partition of $T$ of height at most $2(k-1)+2=2k$.
\end{proof}

The fact that trees of bounded pathwidth have path-partitions of bounded height is a natural observation. 
It is thus likely that this observation was made before though we are not aware of any relevant reference. 
We also note that we made no effort to optimize the bound in Lemma~\ref{lemma:representation} and we do not know whether the factor $2$ is unavoidable. 

Let $T$ be a tree and fix a path-partition $(\calT, \calP)$ of $T$. 
Suppose further that $T$ is embedded in the plane faithfully to $(\calT, \calP)$.  
We use the following terminology when discussing paths in $T$. 
First, every path $P\in \calP$ has a corresponding {\DEF level}, which is defined as the height 
of the corresponding vertex in $\calT$. By extension, every vertex of $T$ has a level, 
the level of the path in $\calP$ it belongs to. 
Define the {\DEF base} of an arbitrary path $P$ in $T$ as the subpath induced by the vertices of $P$ of minimum level.    
Since the base of $P$ is a subpath of a path in $\calP$, its vertices are ordered from left to right by the plane embedding of $T$. 
The path $P$ is said to be {\DEF ascending} if at least one of its two endpoints belongs to its base. 
(We note that in particular all paths in $\calP$ are ascending, even though they are drawn horizontally in the embedding of $T$.) 
Each ascending path $P$ in $T$ has a {\DEF source}, defined as  
the endpoint of $P$ that is in the base of $P$; 
in case both endpoints are in the base, the left-most one is selected as the source.  
We typically think of ascending paths $P$ as being directed from their source to their other endpoint so that 
the notion of $i$th vertex of $P$ is well defined, the first vertex being the source.  
An ascending path $P$ with at least two vertices either {\DEF goes right} or {\DEF goes left} or {\DEF goes up}, depending 
on whether the second vertex of $P$ is on the base and to the right of the source, or 
on the base and to the left of the source, or is one level higher. 

Next we generalize the notion of repetition as follows. 
A {\DEF near repetition} is a sequence of the form $x_1\ldots x_ry_1\ldots y_g x_1\ldots x_r$, 
where $r\geqslant 1$ is its {\DEF length}, and $g\geq 0$ is said to be 
its {\DEF gap}. 
(Thus for $g=0$ this is the usual notion of repetition.) 
Now, let us return to our tree $T$ from the previous paragraph, 
and let $\phi$ denote an arbitrary coloring of its vertices. 
A slightly technical but key definition for our purposes is the following:
An ascending path $P$ of $T$ is said to be {\DEF $\phi$-bad} 
if, enumerating its vertices as $v_{1}, v_{2}, \dots, v_{p}$ starting from its source, 
the sequence $\phi(v_{1})\phi(v_{2}) \dots \phi(v_{p})$ forms
a near repetition $x_1\ldots x_ry_1\ldots y_g x_1\ldots x_r$ of length $r$ and gap $g$ 
where at most $r$ vertices from $v_{r+1}, \dots, v_{r+g}$ lie in the base of $P$. 
(That is, either $g \leq r$, 
or $g > r$ but at most $r$ vertices from the `gap' section are in the base of $P$.) 
An ascending path that is not $\phi$-bad is said to be {\DEF $\phi$-good}.  
We sometimes drop $\phi$ when using these two adjectives if 
the coloring $\phi$ they refer to is clear from the context

Equipped with these definitions we may now state the following technical lemma, 
which turns out to be the heart of the proof. 

\begin{lemma}\label{lemma:good}
There is a function $f:\Nat \times \Nat \to \Nat$ such that, for every 
$\ell \geq 1$, every $h \geq 0$, and every tree $T$ faithfully embedded according to a path-partition $(\calT, \calP)$ of $T$ of height $h$ with lists $L_v$ $(v\in V(T))$ of colors of size $f(\ell,h)$, one can find sublists $S_v \subseteq L_v$ $(v\in V(T))$ of size $\ell$ such that, for every coloring $\phi$ of $T$ with these sublists, every ascending path of $T$ is $\phi$-good. 
\end{lemma}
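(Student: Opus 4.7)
The plan is to prove Lemma~\ref{lemma:good} by induction on $h$, defining the function $f$ recursively.

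For the base case $h = 0$, the tree $T$ is a single horizontal path and every ascending path is a subpath whose base coincides with itself; the condition ``at most $r$ gap vertices on the base'' thus reduces to $g \leq r$, so $\phi$-bad paths are exactly the subpaths whose color sequence is a near repetition of gap at most its length. I would adapt the algorithmic (entropy compression) method of Grytczuk, Kozik, and Micek to produce sublists rather than a single coloring: process the vertices of the path in order, at each vertex sample a uniform size-$\ell$ subset $S_v \subseteq L_v$, and whenever the current sublists admit a coloring creating a $\phi$-bad subpath ending at the latest vertex, re-sample the sublists that support the forbidden pattern. A Moser--Tardos style log records each restart, and entropy compression bounds the required $f(\ell, 0)$ in terms of $\ell$.

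For the inductive step $h \geq 1$, let $Q$ be the root-path of $(\calT, \calP)$ and let $T_1, \ldots, T_c$ be the components of $T - V(Q)$; each $T_i$ inherits a path-partition of height at most $h - 1$. I would apply the inductive hypothesis to each $T_i$ with an enlarged list-size parameter $\ell^{*} = \ell^{*}(\ell, h)$, chosen so that $f(\ell, h) \geq f(\ell^{*}, h - 1)$, obtaining sublists $S^{*}_v \subseteq L_v$ of size $\ell^{*}$ such that every coloring of $T_i$ from these sublists makes every ascending path of $T_i$ good. It then remains to further refine each $S^{*}_v$ to some $S_v$ of size $\ell$ and to choose sublists $S_u \subseteq L_u$ of size $\ell$ for $u \in V(Q)$, so that the only remaining ascending paths, namely those whose base lies on $Q$, are also good for every coloring from the final sublists.

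Such a path has its base on $Q$ (consisting of some $b$ vertices starting from its source) followed by, optionally, a single vertical edge into one $T_i$ and then an ascending path in $T_i$ beginning at the center of $T_i$. Unpacking the definition shows that the ``at most $r$ gap vertices on the base'' condition amounts to $b \leq 2r$ or $g \leq r$. For both regimes I would run a second entropy-compression procedure, sampling the $S_u$ along $Q$ while simultaneously subsampling each $S^{*}_v$ to $S_v$, and restarting whenever the current sublists admit a $\phi$-bad path with base on $Q$. The key leverage is that ascending paths inside each $T_i$ are already good by induction, so each restart is witnessed by a description of bounded size (position of the base on $Q$, entry point into some $T_i$, and the periodic structure of the near repetition on the new piece), keeping the per-restart log entropy bounded in terms of $\ell$ and $h$ only. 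The main obstacle will be the regime $b \leq 2r$ with $g > r$, where a short segment of $Q$ is paired with a long climb into a potentially tall subtree $T_i$: one must argue that the inductive goodness of $T_i$ cuts the number of possible witnesses for such near repetitions down to a family encodable with $O_{\ell,h}(1)$ bits per restart, and this is what forces the rapid growth of $f(\ell, h)$ in $h$.
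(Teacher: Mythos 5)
Your high-level plan matches the paper's: induct on the height $h$, handle the subtrees $T_1,\dots,T_c$ by the inductive hypothesis with an inflated list size, and run an entropy-compression argument to thin the lists so that the remaining ``risky'' ascending paths (those with base on the root-path $Q$) become good. The base case $h=0$ is also the same in spirit. However, the paper organizes this more cleanly by extracting a single, self-contained lemma (Lemma~\ref{lemma:more-technical}) about \emph{plane arborescences}: with lists of size $32\ell^3+1$, one can choose $\ell$-sublists so that every directed path starting on the rightmost path is good. This arborescence lemma is then applied once in the base case and \emph{twice} in the inductive step, always to refinements of the previously chosen sublists. Decoupling the entropy-compression argument from the induction on $h$ in this way is what keeps the argument tractable.

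There are two genuine gaps in your sketch. First, a risky path can \emph{go left}, \emph{go right}, or \emph{go up} from its source on $Q$, and a single directed structure cannot capture both left- and right-going paths as monotone ascending objects: if you root at $v_1$ (the left end of $Q$), a path leaving $Q$ from $v_k$ and having its source at $v_i$ with $i>k$ is \emph{not} directed in that arborescence. The paper handles this by viewing $T$ as two plane arborescences $A$ and $A'$, rooted at $v_1$ and $v_n$ respectively with $Q$ as a prefix of the rightmost path, and applying Lemma~\ref{lemma:more-technical} to $A$ and then (on the resulting sublists) to $A'$. Your ``second entropy-compression procedure'' is singular, and without this left/right split you will not be able to declare all risky paths to be directed paths in a single structure. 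Second, your proposed resolution of the ``$b\leq 2r$, $g>r$'' regime --- using the inductive goodness of $T_i$ to prune the set of witnesses to something encodable in $O_{\ell,h}(1)$ bits --- is not how one escapes the difficulty, and is not likely to work as stated (inductive goodness gives you constraints on \emph{colorings}, not on which sublists could have been chosen). The paper's encoding is entirely generic to arborescences and makes no use of the inductive structure of $T_i$: when a bad path $v_1\dots v_{2r+g}$ is retracted, only the second occurrence $v_{r+g+1},\dots,v_{2r+g}$ (and its $\up$-set) is erased --- crucially, the gap $v_{r+1},\dots,v_{r+g}$ is never erased, so a long gap costs no entropy at all. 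The log records the position of the prefix on the rightmost path (the $B$ sequence, bounded by $2r$ since at most $r$ gap vertices are on the rightmost path), and the index of each $S_{v_{r+g+j}}$ among $\ell$-subsets of $L_{v_{r+g+j}}$ \emph{intersecting $S_{v_j}$} (the $\Gamma$ sequence), which gives the factor-$\ell^3/N$ entropy saving per re-sampled vertex. You should aim for an encoding of this type rather than trying to lean on inductive information about the $T_i$.
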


In order to motivate Lemma~\ref{lemma:good}, we show that with only a little extra effort (greedy coloring from the sublists) it implies Theorem~\ref{thm:tree}.

\begin{proof}[Proof of Theorem~\ref{thm:tree} (assuming Lemma~\ref{lemma:good})]
Let $T$ be a tree of pathwidth $k$ and let $L_v$ $(v\in V(T))$ be a list assignment for the vertices of $T$ where each list has size $b(k):=f(2k+1, 2k)$, where $f$ is the function from Lemma~\ref{lemma:good}.
By Lemma~\ref{lemma:representation} there is a path-partition $(\calT,\calP)$ of $T$ of height at most $2k$.
By Lemma~\ref{lemma:good} there are sublists $\set{S_v}_{v\in V(T)}$ with $S_v\subseteq L_v$ and $\norm{S_v}=2k+1$ for each vertex $v\in V(T)$, such that
in any coloring $\phi$ of $T$ with these sublists, 
all ascending paths in $T$ are $\phi$-good.

We define a nonrepetitive coloring $\phi$ of $T$ with the lists $S_v$ ($v\in V(T)$) in a greedy manner. 
We color the vertices of $T$ one by one in non-decreasing order of their levels. 
Let $v\in V(T)$ be a vertex under consideration. 
Let $v_{1}v_{2}\dots v_{p}$ denote the shortest path in $T$ from $v_{1}=v$ to the root-path 
(thus it enters the root-path in vertex $v_p$). 
Recall that every edge of the form $v_{i-1}v_{i}$ with $i\in\set{2,\ldots,p}$ is either horizontal or vertical in $T$.
Let $$G(v):=\big\{v_i\mid i\in\set{2,\ldots,p} \textrm{ and } v_{i-1}v_i \textrm{ is a vertical edge}\big\}.$$ 
Each vertex in $G(v)$ is said to be a {\DEF guard} for vertex $v$. 
Note that $\norm{G(v)}$ is exactly the level of vertex $v$ in $T$, and thus 
in particular $\norm{G(v)}\leq 2k$. 
We color $v$ as follows: 
Let $\phi(v)$ be an arbitrarily chosen color from 
the non-empty set $S(v)-\phi(G(v))$. 
(Here, $\phi(G(v))$ denotes the set of colors used for vertices in $G(v)$; note that these 
vertices are already colored since they lie on lower levels.) 

We claim that $\phi$ is a nonrepetitive coloring of $T$.
Arguing by contradiction, suppose that there is a repetitively colored
path $P=v_1\ldots v_p w_1 \ldots w_p$. 
Consider the edge $e=v_p w_1$. 
First we show that $e$ belongs to the base of $P$. 
Suppose not, and consider the shortest subpath of $P$ that includes the edge $e$ and has one endpoint in the base of $P$. 
Reversing $P$ if necessary, we may assume without loss of generality that this subpath 
is of the form $v_pw_1\ldots w_m$, with $w_m$ being the only vertex on the base. 
Observe that $w_{m-1}w_m$ is a vertical edge.
This implies that $w_m$ is a guard for all the vertices in $\set{v_1,\ldots,v_p, w_1,\ldots,w_{m-1}}$. 
In particular, the color $\phi(w_m)$ cannot have been used for vertex $v_m$ 
since $\phi(v_m) \in S(v_{m})-\phi(G(v_{m}))$, contradicting the fact that $\phi(w_m)=\phi(v_m)$. 
Therefore, the edge $e$ must lie in the base of $P$.

Let $\ell$ and $r$ be the number of vertices in  $\set{v_1,\ldots,v_p}$ 
and $\set{w_1,\ldots,w_p}$, respectively, that are in the base of $P$. 
Reversing $P$ if necessary,  
we may assume without loss of generality that $\ell \leq r$.
Consider the path $P'=w_rw_{r-1}\ldots w_1 v_p \ldots v_1$. 
Observe that $P'$ is an ascending path as one of its endpoints, namely $w_r$, is in the base of $P'$.
Now, $\phi(w_r)\phi(w_{r-1})\ldots \phi(w_1) \phi(v_{p}) \ldots \phi(v_1)$ is a near repetition of length $r$ with gap $p-r$, and exactly $\ell$ vertices from the gap section are in the base of $P'$. 
Since $\ell\leq r$, we deduce that $P'$ is $\phi$-bad, contradicting the fact 
that every ascending path is $\phi$-good. 
\end{proof}

An {\DEF arborescence} is a rooted directed tree where the edges are directed 
away from the root. It will be convenient to consider arborescences that 
are embedded in the plane without edge crossings in such a way that 
the root is drawn at the bottom and all arcs go up (thus the source of an arc is drawn below its sink), which we simply call {\DEF plane} arborescences. 
The {\DEF height} of a vertex in an arborescence is defined as its distance to the root, 
thus in particular the root has height $0$. 
The {\DEF rightmost path} of a plane arborescence is the 
path obtained by starting from the root and always taking the rightmost 
arc going up, until reaching a leaf. 

We classify directed paths in a plane arborescence $A$ 
as being good or bad w.r.t.\ a given coloring $\phi$ of $A$, 
similarly as for ascending paths: Say that 
a directed path $P$ is {\DEF $\phi$-bad} if, enumerating its vertices as $v_{1}v_{2} \dots v_{p}$ in order, 
the sequence $\phi(v_{1})\phi(v_{2}) \dots \phi(v_{p})$ can be written 
as a near repetition $x_1\ldots x_ry_1\ldots y_g x_1\ldots x_r$ of length $r$ and gap $g$ 
where at most $r$ vertices from $v_{r+1}, \dots, v_{r+g}$ lie on the rightmost path of $A$. 
(That is, either $g \leq r$, or $g > r$ but at most $r$ vertices from the `gap' section are on the rightmost path.) If the directed path $P$ is not $\phi$-bad then it is {\DEF $\phi$-good}. 

\begin{lemma}\label{lemma:more-technical}
Let $\ell \geq 1$, let $A$ be a plane arborescence, and let $L_v$ $(v\in V(A))$ be lists of colors of size $32\ell^3+1$.
Then one can find sublists $S_v \subseteq L_v$ $(v\in V(A))$ of size $\ell$ such that, for every coloring $\phi$ of $A$ with these sublists, 
every directed path starting on the rightmost path is $\phi$-good. 
\end{lemma}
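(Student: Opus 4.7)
The plan is to prove the lemma by a probabilistic argument in the spirit of \cite{DJKW, KM13}. First I will choose, independently for every vertex $v$, a sublist $S_v$ uniformly at random among all $\ell$-subsets of $L_v$, and then argue that with positive probability no coloring $\phi$ of $A$ using these sublists creates a bad directed path starting on the rightmost path. Such a bad $\phi$ exists if and only if some pair $(P,c)$ is \emph{realized} by the sublists---that is, $c_i \in S_{v_i}$ for every $i$---where $P = v_1 \ldots v_p$ is a directed path starting on the rightmost path and $c = c_1 \ldots c_p$ is a near-repetition coloring of $P$ with parameters $r, g$ (so $p = 2r+g$) satisfying the gap condition. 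By Markov's inequality it suffices to bound the expected number of realized such pairs strictly below $1$.

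By independence of the sublists, any single pair $(P,c)$ is realized with probability $(\ell/(32\ell^3+1))^p$. For a fixed $P$ and fixed $(r,g)$, the number of admissible colorings $c$ is at most $(32\ell^3+1)^{r+g}$: the first $r+g$ entries of $c$ can be chosen freely from the corresponding lists while the remaining $r$ entries are forced by the near-repetition pattern. Multiplying, each triple $(P,r,g)$ contributes at most $\ell^{2r+g}/(32\ell^3+1)^r$ to the expectation, which decays geometrically in $r$ provided we can control the number of admissible paths.

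The key structural input is the plane arborescence itself. Every directed path $P$ starting on the rightmost path decomposes uniquely into a prefix $v_1 \ldots v_s$ lying on the rightmost path, followed by a suffix $v_{s+1} \ldots v_p$ descending into one of the subtrees attached to the rightmost path at $v_s$. Unpacking the gap condition shows that, when $g > r$, the prefix must be short, namely $s \le 2r$, while for $g \le r$ there is no restriction on $s$. I plan to count the admissible paths by summing over the starting vertex on the rightmost path, the prefix length $s$, the first off-rightmost child of $v_s$, and the descending sequence inside that subtree, and then combine this count with the geometric decay $(32\ell^3+1)^{-r}$ to show the total expected count of realized bad pairs is strictly less than $1$.

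The main obstacle is exactly this counting step. A plane arborescence may have unbounded branching, so the naive number of directed paths of length $p$ from a single vertex can be exponential in $p$, and the probability factor $(\ell/(32\ell^3+1))^p$ alone is not enough to tame it. The gap condition has to be exploited quantitatively, with each realizable bad pair charged to a carefully chosen structural feature (for instance, the pair consisting of its starting vertex on the rightmost path and its branching vertex off the rightmost path), so that the resulting sum collapses to a convergent geometric series in $1/\ell$. The cubic-in-$\ell$ list size $32\ell^3 + 1$ is calibrated precisely so that, after this charging, the expected number of realizable bad pairs drops strictly below $1$, delivering the desired sublists.
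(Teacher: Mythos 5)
Your proposal uses a first-moment/union-bound argument (bound the expected number of realized bad path--coloring pairs below~$1$), whereas the paper uses an entropy-compression, Moser--Tardos-style algorithm. These are genuinely different techniques, and yours has a gap that cannot be fixed within the first-moment framework. The obstruction you flag---unbounded branching---is fatal, and the charging idea you sketch does not resolve it. Concretely, take the arborescence to be a star: the root $v$ is the whole rightmost path and has $\Delta$ leaf children $w_1,\dots,w_\Delta$, with all lists equal to $[N]$, $N=32\ell^3+1$. For each $j$, the length-$2$ path $vw_j$ is $\phi$-bad for some $\phi$ (a repetition of length $r=1$, gap $g=0$) exactly when $S_v\cap S_{w_j}\neq\emptyset$, which has probability roughly $\ell^2/N$. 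Summing over the $\Delta$ children gives expected count $\Omega(\Delta\ell^2/N)=\Omega(\Delta/\ell)$, unbounded as $\Delta\to\infty$ since $N$ is fixed independently of $\Delta$. Charging each bad pair to its (starting vertex, branching vertex) assigns the $\Delta$ bad events to $\Delta$ \emph{distinct} pairs, so the count does not collapse; the expectation really is large and Markov gives nothing.

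The paper sidesteps this by never sampling all sublists at once. Its algorithm samples a sublist for one vertex at a time, and after each successful sample it \emph{deterministically} extends the assignment over as much of the subtree as possible, only making a fresh random choice at a ``problematic'' vertex. In the star example, once $S_v$ is sampled, every leaf's sublist is assigned deterministically to avoid conflicts (always possible since $N\gg\ell$), so no union bound over the $\Delta$ leaves is ever incurred. The randomness is then controlled by an entropy-compression count: a concise log of the run determines the random choices, and the number of possible logs is $o\bigl(\binom{N}{\ell}^M\bigr)$, yielding the contradiction. This deterministic-extension step---absent from your plan---is the mechanism that makes a degree-independent list size possible, and it is not recoverable by sharpening the counting inside a pure first-moment argument.
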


The interest of Lemma~\ref{lemma:more-technical} is that 
Lemma~\ref{lemma:good} can be proved by iterated applications of 
Lemma~\ref{lemma:more-technical}, as we now show.

\begin{proof}[Proof of Lemma~\ref{lemma:good} (assuming Lemma~\ref{lemma:more-technical})]
The function $f(\ell,h)$ that will be used is defined inductively on $h$ as follows: 
$f(\ell,0) := 32\ell^3+1$, and $f(\ell,h) := f(32(32\ell^3+1)^3+1,h-1)$ for $h>0$.

Let $T$ be a tree with a path-partition $(\calT,\calP)$ of height $h$ and let $L_v$ $(v\in V(T))$ be lists of colors of size $f(\ell,h)$. 
Suppose further that $T$ is faithfully embedded according to $(\calT,\calP)$.  
We prove the lemma by induction on $h$. 
For the base case of the induction, $h=0$, we observe that $T$ is then a path and all ascending paths in $T$ are simply subpaths of $T$.
As $f(\ell,0) = 32\ell^3+1$, by Lemma~\ref{lemma:more-technical} there are sublists $S_v\subseteq L_v$ for each vertex $v\in V(T)$ with $\norm{S_v}=\ell$ such that, for every coloring $\phi$ of $T$ with these 
sublists,  
all ascending paths of $T$ are $\phi$-good, as required.
(As expected, when applying Lemma~\ref{lemma:more-technical} we first turn the path $T$ into an arborescence by directing it from left to right.) 

For the inductive case $h>0$, let $x$ be the root of $\calT$ and let $P_x$ be the root-path of $T$. Let also $D_1,\ldots,D_c$ be the components of the forest $T-V(P_x)$. 
(Note that there is at least one component.)  
For each $i\in\set{1,\ldots,c}$, the path-partition $(\calT,\calP)$ induces 
in a natural way a path-partition $(\calT_i,\calP_i)$ of $D_{i}$ of height at most $h-1$, 
with $\calT_i$ rooted at the only vertex that is a neighbor of $x$ in $\calT$. 
Since $f(\ell,h) = f(32(32\ell^3+1)^3+1,h-1)$, applying induction on $D_{i}$ we obtain for each vertex $v\in V(D_i)$ a sublist $S'_v \subseteq L_v$ of size $32(32\ell^3+1)^3+1$ such that, for every coloring $\phi$ of $D_i$ with these sublists, every ascending path of $D_i$ is $\phi$-good.

Next, for each vertex $v\in V(P_x)$ let $S'_v$ be an arbitrary subset of $L_v$ of size $32(32\ell^3+1)^3+1$. 
Thus, every vertex $v$ of $T$ now has a corresponding sublist $S'_v\subseteq L_v$ of size $32(32\ell^3+1)^3+1$.   
Moreover, given any coloring $\phi$ of the tree $T$ with these sublists, the only ascending paths that could possibly be $\phi$-bad are those having their sources in $P_x$. 
We shall refer to these ascending paths as the {\DEF risky paths} of $T$.   

Enumerate the vertices of the root-path $P_x$ as $v_1v_2\ldots v_{n}$, from left to right.
Define two plane arborescences $A$ and $A'$ from $T$ by rooting $T$ at $v_1$ and $v_n$, respectively, and ensuring that $P_x$ is a prefix of the rightmost path in both instances. 
Note that the rightmost path of $A$ could extend beyond $P_x$ (in case $v_n$ is not a leaf of $T$), and the same is true for the rightmost path of $A'$ (if $v_1$ is not a leaf). 
What is important for our purposes is to observe that each risky path of $T$ starts on the rightmost path in both $A$ and $A'$.
Observe also that each risky path of $T$ that goes right (left) is a directed path in $A$ (respectively $A'$), 
and risky paths that go up are directed in both $A$ and $A'$. 

First, apply Lemma~\ref{lemma:more-technical} on $A$ with list assignment $S'_v$ ($v\in V(A)$), 
giving for each vertex $v\in V(T)$ a sublist $S_v''\subseteq S_v'\subseteq L_v$ of size $32\ell^3+1$.
Next, apply Lemma~\ref{lemma:more-technical} on $A'$ with list assignment $S''_v$ ($v\in V(A)$), 
giving for each vertex $v\in V(T)$ a sublist $S_v \subseteq S_v''\subseteq S_v'\subseteq L_v$ of size $\ell$. 
Since every risky path of $T$ is mapped to a directed path starting on the rightmost path in $A$ or $A'$, 
by the properties of the sublists $S_v''$ and $S_v$ ($v\in V(T)$) 
guaranteed by Lemma~\ref{lemma:more-technical} 
we know that, for every coloring $\phi$ of $T$ with the lists $S_v$ ($v\in V(T)$), 
all risky paths of $T$ are $\phi$-good. 
Therefore, the lists $S_v$ ($v\in V(T)$) have the desired properties. 
\end{proof}

It remains to prove Lemma~\ref{lemma:more-technical}.  
As alluded to in the introduction, we will do so by adapting the algorithmic method used in~\cite{DJKW, GKM, KM13}.

\begin{proof}[Proof of Lemma~\ref{lemma:more-technical}]
Let $N:=32\ell^3+1$ denote the size of the lists. 
For $v\in V(A)$, let $\up(v)$ denote the set of vertices $w \in V(A)$ that can be reached via 
a directed path from $v$ in $A$. (Note that $v \in \up(v)$.) 
In the proof, we will often abbreviate `subset of size $k$' and `sublist of size $k$' into {\DEF `$k$-subset'} and {\DEF `$k$-sublist'}, respectively.  

We define a simple randomized algorithm, Algorithm~\ref{algo:1}, that tries to find an $\ell$-sublist $S_v$ of $L_v$ for each vertex $v\in V(A)$ such that, for every coloring $\phi$ of $A$ with these sublists, every directed path starting on the rightmost path of $A$ is $\phi$-good.
The following informal description of the algorithm is complemented by the more formal description given in Algorithm~\ref{algo:1}.   
The algorithm explores the arborescence $A$ via a depth-first, left-to-right search starting from the root.
The algorithm maintains at all time $\ell$-sublists $S_v\subseteq L_v$  for all vertices $v$ encountered {\em before} 
the current vertex $u$ in the depth-first search of $A$. 
These sublists have the following property: 
For every coloring $\phi$ of these vertices with these sublists ($\phi$ being thus a partial coloring of $A$), 
every directed path starting on the rightmost path of $A$ that is fully colored is $\phi$-good. 
We say that such a partial sublist assignment is {\DEF valid}.

Next, the algorithm treats the current vertex $u$ and tries to maintain the above property. 
To do so, the algorithm first chooses an $\ell$-sublist $S_u\subseteq L_u$  uniformly at random. 
If this new sublist $S_u$ triggers the existence of a $\phi$-bad path in $A$ for some (partial) coloring $\phi$ with the current sublists---that is, the current sublist assignment is no longer valid---it erases some of these sublists as follows: 
Say $v_1\ldots v_{2r+g}$ with $v_{2r+g}=u$ is a $\phi$-bad path with color sequence $\phi(v_1)\ldots\phi(v_{2r+g})$ of the form $x_1\ldots x_r y_1\ldots y_g x_1\ldots x_r$. 
The algorithm then erases the choice for the list $S_v$ for all vertices $v$ 
contributing to the second occurrence of the repeated sequence and their descendants, 
that is, for all $v\in \up(v_{r+g+1})$. 
At the next iteration, $v_{r+g+1}$ becomes the new current vertex, that is, the next vertex to be treated.  
Notice that this makes the algorithm backtrack a number of steps w.r.t.\ the depth-first left-to-right search of $A$.

If on the other hand, the new sublist $S_u$ does not trigger any such bad configuration, then the current sublist assignment remains valid.  
In this case, before proceeding to the next random choice   
the algorithm first tries to extend the current sublist assignment deterministically as much as 
possible. 
(While it might not be clear at first glance why 
this deterministic extension step is needed, we remark that it is actually a key feature of the algorithm without which we could not do the analysis below.) 
This is done as follows:
The algorithm considers the children $u_1,\ldots,u_k$ of $u$ one by one in left-to-right order, 
until a {\DEF problematic} child is identified: 
When considering $u_{j}$, the algorithm checks whether there exist $\ell$-subsets $S_v\subseteq L_v$ for all $v\in \up(u_j)$ such that, taken together, they extend the current sublist assignment in such a way that it remains valid. 
If these subsets exist, the current sublist assignment is extended in this way to the whole subtree rooted at $u_{j}$, 
and the algorithm considers the next child of $u$. 
(If there are more than one valid choice for these sublists, the algorithm chooses one according to a deterministic rule.) 
If no such extension of the current sublist assignment can be found for vertices in $\up(u_j)$, then $u_{j}$ is identified as being a problematic child of $u$, and $u_{j}$ becomes the next vertex to be treated. 
Observe that this effectively makes the algorithm proceed with the depth-first left-to-right search of $A$ for some number of steps.

Let us make some observations concerning the algorithm: Right at the beginning, after selecting a sublist for the root of $A$, two situations can occur: 
(1) No child of the root is problematic. 
Thus a valid sublist assignment for all vertices of $A$ has been found, and the lemma is proved. 
(2) Some child of the root is problematic. 
In this case, it is important to observe that later on each vertex $u$ for which a random sublist $S_u$ is chosen was problematic when its parent was considered.  
It follows in turn that some child of $u$ will be problematic, since otherwise we could extend the sublist assignment to the whole subtree rooted at $u$. 

To summarize, we may assume that we are in case (2) at the beginning, since otherwise we are done. 
This implies that the vertex $u$ that is currently being treated by the algorithm always has a problematic child. 
Moreover, the algorithm {\em will never stop}, simply because while it can erase the choices of 
sublists for some vertices of $A$ it cannot do so for the root, as is easily checked. 
Our proof will then proceed in the following way: 
We run the algorithm until it made $M$ random choices of sublists and then stop it, where $M$ will be some large number 
which is a function of $|V(A)|$ and $\ell$. 
We then carefully set up a concise description (called {\DEF log}) of its execution that is precise enough to allow us to recover from it all random choices that were made by the algorithm. 
Finally, we  count the number of distinct logs that can occur after $M$ random choices, 
and show that, for sufficiently large $M$,  this number is strictly less than ${N \choose \ell}^{M}$.  
From this we deduce that not all sequences of $M$ random choices of sublists can occur in case (2).  
In other words, there is a choice for the sublist of the root of $A$ leaving us in case (1), which then finishes the proof. 

This concludes our informal description of the algorithm,  see Algorithm~\ref{algo:1} for the pseudo code. 
A few remarks about the latter are in order: 
First, we assume that the $\ell$-subsets of $L_v$ have been enumerated for each $v \in V(A)$, so that 
the $j$-th  $\ell$-subset of $L_v$ is well defined for $j\in \left[\binom{N}{\ell}\right]$. 
This ordering also induces an ordering on every subcollection of the collection of $\ell$-subsets of $L_v$.
We also use this enumeration in the proof. 
Second, for simplicity we model the random choices made by the algorithm by a sequence $r_1, r_2, \dots, r_M$ of numbers given in input, each between $1$ and ${N \choose \ell}$, where $r_i$ will be the number used for the $i$-th random choice.  
We call this sequence the {\DEF random input}. 
Third, in line~\ref{algo:choice-bad-path}, the $\phi$-bad path is chosen according to some fixed  rule. 
Similarly, in line~\ref{algo:choice-extension}, the sublists $S_v$ are chosen according to some fixed rule. 
(In each case, the actual rule is irrelevant, as long as it is deterministic.)

\begin{algorithm-hbox}[t]
\caption{Attempts to find sublists $S_v$ of $L_v$ for all $v\in V(A)$, each of size $\ell$, such that for every coloring $\phi$ of $A$ with these sublists, every directed path starting on the rightmost path of $A$ is $\phi$-good.}\label{algo:1}
\SetAlgoLined
\textbf{input:} Lists $L_v$ for all $v\in V(A)$ and random input $r_1, r_2, \dots, r_M$ \\
$i\gets 1$ \\
$u\gets \text{root of A}$\\ 
$S_v \gets \undefined$ for each $v \in V(A)$\\
\While{$i \leq M$} {
  $S_u\gets$ $r_i$-th subset of size $\ell$ of $L_u$\label{algo:def-Sj}\\
  \If{\upshape{there is a $\phi$-bad path starting on the rightmost path of $A$ for some coloring $\phi$ with the lists $S_v$ $(v\in V(A))$}} {
    let $v_1\ldots v_{2r+g}$ with $v_{2r+g}=u$ be a $\phi$-bad path and let \label{algo:choice-bad-path} \\
    \nonl\quad $\phi(v_1)\ldots\phi(v_{2r+g})$  be a sequence of the form $x_1\ldots x_ry_1\ldots y_g x_1\ldots x_r$ \label{algo:def-of-g-and-k}\\
    $S_{v} \gets \undefined$ for each $v \in \up(v_{r+g+1})$ \label{algo:erase-Sj}\\  
    $u \gets v_{r+g+1}$\label{algo:new-current}\\
   }
   \Else{
    \upshape{let $u_1,\ldots,u_k$ denote the children of $u$ ordered from left to right}\label{algo:choice-extension-begin}\\
    $j\gets 1$ \\
    \While{ \upshape{$j\leq k$ and there is a valid extension of the current sublist 
    \quad \quad assignment by some $S_v\subseteq L_v$ for all $v\in \up(u_j)$}}{
     \upshape{choose such sublists $S_v\subseteq L_v$ for all $v\in \up(u_j)$}\label{algo:choice-extension} \\
     $j\gets j+1$ \label{algo:choice-extension-end}
    }
    \If{\upshape{$j=k+1$}}{
    \nonl({\it in this case $i=1$ and $u$ is the root of $A$})\\
    \textbf{return} $S_v$ for all $v\in V(A)$
    }   
    \lElse{\upshape{$u \gets u_{j}$\label{algo:traverse-A}
    }}  
   }
     $i\gets i+1$ \label{algo:i-inc}\\
}
\textbf{report failure}
\end{algorithm-hbox}

In the following, by the $i$-th iteration of the algorithm, we mean the $i$-th iteration of the \texttt{while} loop.
We call operations in lines~\ref{algo:def-of-g-and-k}-\ref{algo:new-current} a {\DEF retraction} of the near repetition $x_1\ldots x_ry_1\ldots y_g x_1\ldots x_r$. 
With a slight abuse of terminology, we will also say that the corresponding $\phi$-bad path has been {\DEF retracted}. 

From now on we argue by contradiction and suppose that the desired sublists for the vertices of $A$ do not exist. 
In other words, we assume that every choice for the sublist of the root at the beginning of the algorithm leaves us in case (2) described above. In particular, for all $M$ and all random inputs $r_1, r_2, \dots, r_M$, Algorithm~\ref{algo:1} 
runs for $M$ steps and then reports failure. 
 
We are going to create a concise description of what Algorithm~\ref{algo:1} does during the $M$ steps of its execution. 
This description is completely determined by the lists and the random input. 
We see the lists $L_v$ ($v\in V(A)$) as being fixed and thus treat the description as a function of the random input $r_1, r_2, \dots, r_M$. 
The description, which we call an {\DEF $M$-log}, consists of a $4$-tuple $(D,\calS,B,\Gamma)$ defined as follows:
\begin{enumeratei}
\item $D=(d_1,\ldots, d_M)$ and  $d_i$ ($i \in [M]$) is the height of the vertex $u$ in Algorithm~\ref{algo:1} at the {\em end} of iteration $i$, when reaching line~\ref{algo:i-inc} after $u$ was updated 
in the {\tt if}--{\tt else} block.   
(Thus, for $i<M$, $d_i$ is simply the height of vertex $u$ at the {\em beginning} of iteration $i+1$.)  
\\
\item 
$\calS:V(A) \to \left[\binom{N}{\ell}\right] \cup \set{\undefined}$ is a function encoding the final partial sublist assignment to vertices of $A$ at the end of iteration $M$. 
More precisely, for each $v\in V(A)$ we have $\calS(v)=j$ if sublist $S_v$ is defined at that moment and $S_v$ is the $j$-th $\ell$-subset of  $L_v$, and $\calS(v)=\undefined$ if $S_v$ is not defined. \\
\item 
$B=(b_1, \ldots, b_M)$ and $b_i$ ($i\in [M]$) are defined as follows: If no bad path was retracted during the $i$-th iteration then $b_i=0$. 
Otherwise, $b_i$ is the number of vertices in the prefix $v_1 \ldots v_{r+g}$ of the retracted bad path $v_1 \ldots v_{2r+g}$ that are on the rightmost path of $A$. 
Observe that $b_i \leq 2r$ in the latter case since at most $r$ vertices from the gap section of a bad path lie on the rightmost path of $A$.\\
\item 
$\Gamma=(\Gamma_1,\ldots,\Gamma_M)$, where $\Gamma_i$ ($i\in[M]$) is defined as follows.  
If no bad path was retracted during the $i$-th iteration then $\Gamma_i=\undefined$. 
Otherwise, letting $v_1 \ldots v_{2r+g}$ be the retracted bad path, we set $\Gamma_i=(\gamma_1, \dots, \gamma_r)$ with $\gamma_j$ ($j\in[r]$) defined as follows: 
Let $\mathcal{X}_j$ denote the collection of $\ell$-subsets of $L_{v_{r+g+j}}$ that have a {\em non-empty intersection with $S_{v_{j}}$}. 
Then $\gamma_j$ is the index of the set $S_{v_{r+g+j}}$ in the collection $\mathcal{X}_j$.  
\end{enumeratei}

Now, our aim is to bound from above the number of distinct $M$-logs $(D,\calS,B,\Gamma)$ by a relatively small function of  $M, |V(A)|$, and $\ell$. 
Recall that the lists $L_v$ $(v\in V(A))$ are fixed, thus $\ell$ and $|V(A)|$ are fixed, and only $M$ and the random input $r_1, \dots, r_M$ vary. 
There are exactly $\binom{N}{\ell}^M$ distinct random inputs of length $M$, and our goal in the following analysis is to deduce that there are $o\left( \binom{N}{\ell}^M \right)$ distinct $M$-logs. (The asymptotic notation is to be interpreted with respect to the variable $M$ of course.)  
This is then a contradiction for $M$ large enough, as  mentioned earlier. 

We start by estimating the number of $M$-tuples  $D=(d_1,\ldots,d_M)$. 
Each sequence $D=(d_1, \ldots, d_M)$ can be injectively mapped to its sequence of differences $(d_2-d_1,\ldots, d_M-d_{M-1})$. 
(Note that $d_1=1$.)
All numbers in this new sequence belong to the set $\set{1,0,-1,-2,\ldots}$, as is easily seen. 
Next we transform that sequence into yet another sequence by replacing each number $k$ by $1$ followed by 
$1-k$ consecutive occurrences of $-1$. 
For instance, the sequence of differences $(1,1,1,1, 1, -2, -1,1)$ gets mapped to $(1,1,1,1, 1, 1, -1,-1,-1,1, -1, -1,1)$.
It is easy to see that the second transformation is also injective. 
The resulting sequence $D'$ is a sequence over the alphabet $\{-1,1\}$.
The number of $1$'s in $D'$ corresponds to the number of times the algorithm assigns a value to some variable $S_{u}$ in line~\ref{algo:def-Sj}, and is thus equal to the number of iterations, that is, $M$.  
The number of $-1$'s in $D'$ is the sum of all values of $r$ over all 
bad paths $v_1\ldots v_{2r+g}$ considered in lines~\ref{algo:def-of-g-and-k}-\ref{algo:erase-Sj}
during the execution.
One can see this as the number of times the algorithm `erases' a value of $S_{v}$ for some $v\in V(A)$ that was set earlier using the random input (note that an execution of line~\ref{algo:erase-Sj} erases $r$ such values).
Thus, this number is at most the total number of executions of line~\ref{algo:def-Sj}, that is, the number of $1$'s in $D'$, which is $M$. 
Hence, $D'$ has size between $M$ and $2M$, and there are at most 
$2^M + 2^{M+1} + \cdots + 2^{2M} \leq 2^{2M + 1}$ such sequences $D'$.

Next we bound the number of different functions $\calS$. 
Note that this number depends only on $N$, $\ell$, and $|V(A)|$, 
so it can be treated as a constant w.r.t.\ $M$.
We denote this number by $c$ (its exact value being irrelevant for the analysis).

Now we turn our attention to the number of possible tuples $B=(b_1,\ldots,b_M)$ in an $M$-log $(D,\calS,B,\Gamma)$. 
Recall that if $b_i \neq 0$ then $b_i \leq 2r$, where $r$ is the size of the repeated part in the near repetition retracted during the $i$-th iteration. 
Hence, the sum of the $b_i$'s is at most twice the total number of $-1$'s in $D'$. 
This implies that $b=\sum_{i=1}^M b_{i} \leq 2M$.
The number of such sequences is easily seen to be at most $2^{3M}$, as one can encode each such sequence as a binary word $1^{b_1}01^{b_2}01^{b_3}0 \ldots1^{b_M}01^{2M-b}$ of length $3M$ (the $i$-th section $1^{b_i}0$ encodes the number $b_i$ for $i=1,\dots, M$, and the padding $1^{2M-b}$ at the end of the word ensures that the length is $3M$).

It remains to estimate the number of possible tuples $\Gamma=(\Gamma_1,\ldots,\Gamma_M)$ which can occur for fixed $D$, $\calS$, and $B$. 
Consider an index $i\in [M]$. 
If $\Gamma_i \neq \undefined$, then each number in the sequence $\Gamma_i$ is the index of some $\ell$-subset of some list $L_v$ ($v\in V(A)$) among those that intersect some other list $S$ of size $\ell$. 
Clearly, the number of $\ell$-subsets of $L_v$ intersecting $S$ is at most $\binom{\ell}{1}  \cdot\binom{N-1}{\ell-1} + \binom{\ell}{2} \cdot\binom{N-2}{\ell-2} + \cdots + 
 \binom{\ell}{\ell-1} \cdot\binom{N-(\ell-1)}{1} + \binom{\ell}{\ell} \cdot\binom{N-\ell}{0}$. 
Since $N$ is much bigger than $\ell$ (recall that $N > 32\ell^3$), each term in the previous sum is upper bounded by the first term $\ell \cdot\binom{N-1}{\ell-1}$, and we obtain that the sum is at most $\ell^2 \cdot\binom{N-1}{\ell-1}$. 
(Of course, this is a rather crude upper bound but it is good enough for our purposes.)    
Hence all numbers in the sequence $\Gamma_i$ are between $1$ and $\ell^2\cdot\binom{N-1}{\ell-1}$. 
Note also that the length of the sequence $\Gamma_i$ is exactly the length of the near repetition retracted during the $i$-th iteration. 
Hence, given $D$ we know exactly which $\Gamma_i$ are defined and what are their lengths.
The sum of these lengths is the total number of $-1$'s in $D'$, which is at most $M$. 
Therefore, for a fixed $D$ there can be at most $\left(\ell^2\cdot\binom{N-1}{\ell-1}\right)^M$ distinct sequences $\Gamma$.

Putting all the previous observations together, we deduce that the number of distinct tuples  $(D,\calS,B,\Gamma)$  is at most 
\begin{align*}
&2^{2M + 1} \cdot c \cdot 2^{3M} \cdot \left(\ell^2 \binom{N-1}{\ell-1}\right)^M \\
&\qquad=O\left( 32^M \cdot \left( \frac{\ell^3}{N}\binom{N}{\ell} \right)^M \right)\\
&\qquad=o\left( \binom{N}{\ell}^M \right), 
\end{align*}
as desired. (The $o(\cdot)$ follows from the fact that $N > 32\ell^3$.) 
This shows that, if $M$ is sufficiently large, 
then the number of possible $M$-logs is strictly smaller than $\binom{N}{\ell}^M$, the number of 
random inputs of length $M$.  
To obtain the desired contradiction, it remains to show that runs of the algorithm on 
different sources produce distinct $M$-logs, 
that is, that any $M$-log $(D,\calS,B,\Gamma)$ uniquely determines 
the random input used by the algorithm to produce it.  
This is exactly what we show next. 

Consider an $M$-log $(D,\calS,B,\Gamma)$ and let $r_1, \dots, r_M$ be {\em any} random input that can lead to its production.  
We prove that $r_1, \dots, r_M$ are uniquely determined by induction on $M$. 
This is clearly true if $M=1$, since the function $\calS$ tells us explicitly which sublist was chosen 
for the root of $A$. 
So assume $M > 1$ for the inductive case. 
Let $D= (d_1,\ldots, d_M)$, $B=(b_1, \ldots, b_M)$, and $\Gamma=(\Gamma_1,\ldots,\Gamma_M)$. 

First suppose that $d_M=d_{M-1}+1$. 
Then no near repetition was retracted during the $M$-th iteration. 
The vertex $u$ that was the current vertex at the beginning of the $M$-th iteration is determined by the 
function $\calS$: 
It is the {\em last} vertex $w\in V(A)$ in the depth-first left-to-right search order from the root 
such that $\calS(w) \neq \undefined$ that has a child $w'$ with $\calS(w') = \undefined$.
(Note that the first such child $w'$ is the problematic child $u_{j}$ of $u$ identified 
when exiting the inner while-loop.)  
Now, observe that $r_M$ is simply the index of $\calS(u)$ among $\ell$-subsets of $L_u$, and is thus completely determined by our $M$-log $(D,\calS,B,\Gamma)$. 

Having determined $r_{M}$, we can use the inductive hypothesis to deduce that $r_1, \dots, r_{M-1}$ are also fully determined by the log as follows. 
Let $(D^{*},\calS^{*},B^{*},\Gamma^{*})$ be the $(M-1)$-log resulting from the execution of the algorithm 
for $M-1$ iterations on random input $r_1, \dots, r_{M-1}$. 
By induction, the latter sequence is uniquely determined by the $(M-1)$-log $(D^{*},\calS^{*},B^{*},\Gamma^{*})$. 
Hence, it is enough to show that $(D^{*},\calS^{*},B^{*},\Gamma^{*})$ 
is in turn uniquely determined by our initial $M$-log $(D,\calS,B,\Gamma)$. 
Clearly, 
\begin{align*}
D^* &= (d_1,\ldots, d_{M-1}), \\
B^* &= (b_1,\ldots,b_{M-1}), \\
\Gamma^* &= (\Gamma_1,\ldots, \Gamma_{M-1}).
\end{align*}
As for $\calS^{*}$, it is simply obtained from $\calS$ by letting the value of each vertex $v\in \up(u)$ 
be $\undefined$. That is, 
$$
\calS^{*}(v) = \left\{ 
\begin{array}{ll}
\undefined & \textrm{ if } v\in \up(u); \\[.5ex]
\calS(v) & \textrm{ otherwise}
\end{array}
\right.
$$
for each $v \in V(A)$ (recall that $u\in \up(u)$). 

In the case when $d_M = d_{M-1}-r+1$ with $r>0$, a near repetition was retracted during the $M$-th iteration with a repeated part of size $r$.
Here we first show 
that $r_1, \dots, r_{M-1}$ are uniquely determined, 
and then we prove that the same holds for $r_{M}$. 

Let $(D^{*},\calS^{*},B^{*},\Gamma^{*})$ denote the $(M-1)$-log  resulting from the execution of the algorithm for $M-1$ iterations on random input $r_1, \dots, r_{M-1}$. 
We thus have:
\begin{align*}
D^* &= (d_1,\ldots, d_{M-1}), \\
B^* &= (b_1,\ldots,b_{M-1}), \\
\Gamma^* &= (\Gamma_1,\ldots, \Gamma_{M-1}). 
\end{align*}  
Let us show that $\calS^{*}$ is completely determined by  $(D,\calS,B,\Gamma)$. 
Let $u$ denote the current vertex at the beginning of the $M$-th iteration. 
(Remark: The vertex $u$ can be deduced from the log $(D,\calS,B,\Gamma)$, as follows from the discussion below.)  
During the $M$-th iteration the sublist $S_{u}$ of $u$ was assigned the $r_{M}$-th $\ell$-subset of $L_{u}$. 
This triggered the existence of a bad path $v_1,\ldots,v_{2r+g}$ with $v_{2r+g}=u$, which was subsequently retracted, i.e.\ $S_{v}$ was then set to $\undefined$ for all $v\in \up(v_{r+g+1})$. 

Now, we determine the first $r+g+1$ vertices $v_1,\ldots,v_{r+g+1}$ of the bad path from the log. 
First, we show that $v_{r+g+1}$ is easily determined:   
Since $\calS^{*}$ differs from $\calS$ only on the vertices from $\up(v_{r+g+1})$, vertex $v_{r+g+1}$ is the first vertex $v$ of $A$ in depth-first left-to-right order 
from the root with $\calS(v) = \undefined$.  
(We remark that $v_{r+g+1}$ would have been the next `current vertex' considered by the algorithm if it were run for an extra iteration.) 

Next, we determine the vertices $v_1,\ldots,v_{r+g}$ from the log.
Let $w$ be the first vertex on the rightmost path of $A$ that is encountered when walking down towards the root of $A$ 
from $v_{r+g+1}$ (note that we could have $w=v_{r+g+1}$). 
By definition of the sequence $B$, among the $r+g$ vertices $v_1, \ldots, v_{r+g}$ the first $b_{M}$ vertices are on the rightmost path of $A$, and none of the remaining $r+g-b_M$ vertices are.  
Since $b_{M} \geq 1$, we may identify vertex $v_1$ by starting at vertex $w$ and walking down towards the root of $A$ either $b_M$ steps (if $w=v_{r+g+1}$) or $b_M-1$ steps (if $w \neq v_{r+g+1}$).      
Now, since we know vertices $v_{1}$, $v_{r+g+1}$ and the integer $r$, 
looking at the path from $v_{1}$ to $v_{r+g+1}$ gives us all intermediate vertices $v_{2}, \dots, v_{r+g}$ (if any), 
as well as the size $g$ of the gap section. 
Hence, $v_1, \dots, v_{r+g+1}$ are determined by the log $(D,\calS,B,\Gamma)$, as claimed.  

Building on this, we now complete the proof.    
Let $\Gamma_{M}=(\gamma_{1}, \dots, \gamma_{r})$. 
We first consider a simple case, namely $r=1$.
Then $u=v_{r+g+1}$, and at the beginning of the $M$-th iteration, 
the $r_{M}$-th $\ell$-sublist $X$ of $L_{u}$ was assigned to variable $S_{u}$, which triggered the existence 
of the bad path. 
By the definition of the sequence $\Gamma_M$, the sublist $X$ is the $\gamma_{1}$-th $\ell$-sublist of $L_{v_{r+g+1}}$ having a non-empty intersection with $\calS(v_{1})$. 
We can thus deduce $X$ from the log $(D,\calS,B,\Gamma)$, and obtain in turn $r_{M}$ from $X$. 
Notice that in this case the sublist assignments to vertices of $A$ at the end of the $(M-1)$-th and at the end of the $M$-th iterations are exactly the same, that is, $\calS^* = \calS$. 
(Indeed, this is what makes the case $r=1$ simpler.) 
Hence, $(D^{*},\calS^{*},B^{*},\Gamma^{*})$ is completely determined by $(D,\calS,B,\Gamma)$. 
By induction, the sequence $r_1, \dots, r_{M-1}$ is uniquely determined by $(D^{*},\calS^{*},B^{*},\Gamma^{*})$, 
and therefore it is also  uniquely determined by $(D,\calS,B,\Gamma)$. 
Since we have seen that $r_{M}$ is uniquely determined as well, this 
concludes the $r=1$ case. 

Now, suppose that $r \geq 2$. 
We can obtain $\calS^{*}(v_{r+g+1})$ from $(D,\calS,B,\Gamma)$, since $\calS^{*}(v_{r+g+1})$ is the $\gamma_{1}$-th $\ell$-sublist of $L_{v_{r+g+1}}$ having a non-empty intersection with $\calS(v_{1})$. 
Knowing $\calS^{*}(v_{r+g+1})$, we can identify vertex $v_{r+g+2}$ as follows. 
Consider the last time the variable $S_{v_{r+g+1}}$ was modified during the execution of the algorithm before the $M$-th iteration, say this is during the  $p_1$-th iteration. 
Thus, during that iteration, $S_{v_{r+g+1}}$ was assigned the set $\calS^{*}(v_{r+g+1})$, and 
this did not trigger the existence of a bad path. 
Then, the children $w_1, \dots, w_k$ of $v_{r+g+1}$ were inspected one by one in order, until a problematic child $w_j$ was found. 
This problematic child $w_j$ is vertex $v_{r+g+2}$. 
This process is completely deterministic, thus we can simulate it. (Indeed, we know the whole sublist assignment for vertices of $A$ at the beginning of the $p_1$-th iteration, and we know the sublist that was sampled for $v_{r+g+1}$ 
during that iteration, namely, $\calS^{*}(v_{r+g+1})$). 
Thus, for each vertex $v\in \up(w_1) \cup \cdots \cup \up(w_{j-1})$, we can figure out how the sublist $S_v$ was set during that iteration, and this is exactly the value of $\calS^{*}(v)$, since those sublists have not been modified afterwards prior to iteration $M$. 
Notice also that $\calS^{*}(v)=\undefined$ for all $v\in \up(w_{j+1}) \cup \cdots \cup \up(w_{r})$, so it only remains to determine $\calS^{*}(v)$ for $v\in \up(v_{r+g+2})$.

We can iterate this argument and discover step by step vertices $v_{r+g+2}, \dots, v_{2r+g}$ and the missing entries of $\calS^{*}$.
We spell out the general argument now, for the sake of completeness 
(the reader who is already convinced that it can be done is invited to skip this paragraph). 
For each index $i = 2, \dots, r-1$, we proceed as follows. 
The set  $\calS^{*}(v_{r+g+i})$ is the $\gamma_{i}$-th $\ell$-sublist of $L_{v_{r+g+i}}$ having a non-empty intersection with $\calS(v_{i})$, and is thus determined by  $(D,\calS,B,\Gamma)$. 
Knowing $\calS^{*}(v_{r+g+i})$, we now identify vertex $v_{r+g+i+1}$. 
Say $S_{v_{r+g+i}}$ was modified for the last time during the $p_i$-th iteration of the algorithm before the $M$-th iteration. 
During that iteration, variable $S_{v_{r+g+i}}$ was assigned the set $\calS^{*}(v_{r+g+i})$, and 
this did not result in the existence of any bad path.  
Next, the children $w_1, \dots, w_k$ of $v_{r+g+i}$ were inspected until a problematic child $w_j$ was found, 
which is vertex $v_{r+g+i+1}$. 
We know the whole sublist assignment for vertices of $A$ at the beginning of the $p_i$-th iteration; indeed, 
this is exactly the one at the {\em end} of the $p_{i-1}$-th iteration (as is easily checked), which we already know.  
We also know the sublist that was sampled for vertex $v_{r+g+i}$ during the $p_i$-th iteration. 
Hence, we can simulate the execution of lines~\ref{algo:choice-extension-begin}--\ref{algo:choice-extension-end} 
of the algorithm for the $p_i$-th iteration. 
This implies that, for each vertex $v\in \up(w_1) \cup \cdots \cup \up(w_{j-1})$, we can determine how the sublist $S_v$ was set during that iteration, and that sublist is precisely the set $\calS^{*}(v)$.
Also, we have that $\calS^{*}(v)=\undefined$ for all $v\in \up(w_{j+1}) \cup \cdots \cup \up(w_{r})$. 

This way we completely determined $\calS^{*}(v)$ for all vertices $v \in \up(v_{r+g+i}) - \up(v_{r+g+i +1})$ for each $i \in [r-1]$. 
For all other vertices $v$ of $A$, we have $\calS^{*}(v)=\calS(v)$.
Thus, $\calS^{*}$ is completely determined by $(D,\calS,B,\Gamma)$, and hence so is the $(M-1)$-th log  $(D^{*},\calS^{*},B^{*},\Gamma^{*})$, as claimed.  

Equipped with the knowledge of $\calS^{*}$, we may now finish the proof in a manner similar to the $r=1$ case. 
Let $X$ denote the $r_{M}$-th $\ell$-sublist $X$ of $L_{v_{2r+g}}$. 
The sublist $X$  is the $\gamma_{r}$-th $\ell$-sublist of $L_{v_{2r+g}}$ having a non-empty intersection with $\calS(v_{r})$, which is thus determined by the log $(D,\calS,B,\Gamma)$. 
Hence, we can obtain $r_M$ from $(D,\calS,B,\Gamma)$. 
Moreover, as we have seen, $(D^{*},\calS^{*},B^{*},\Gamma^{*})$ is completely determined by the log $(D,\calS,B,\Gamma)$. 
By induction, the sequence $r_1, \dots, r_{M-1}$ is uniquely determined by $(D^{*},\calS^{*},B^{*},\Gamma^{*})$, 
and therefore it is uniquely determined by $(D,\calS,B,\Gamma)$. 
This concludes the proof. 
\end{proof}

We remark that no effort has been made to optimize the bound of $32\ell^{3} + 1$ in Lemma~\ref{lemma:more-technical}. 

\section*{Acknowledgements} 
We thank the two anonymous referees for their insightful comments, which greatly helped us improve the exposition.    

%\bibliographystyle{abbrv}
%\bibliography{bibliography}

\begin{thebibliography}{10}

\bibitem{AGHR02}
N.~Alon, J.~Grytczuk, M.~Ha{\l}uszczak, and O.~Riordan.
\newblock Nonrepetitive colorings of graphs.
\newblock {\em Random Structures Algorithms}, 21(3-4):336--346, 2002.

\bibitem{Ber95}
J.~Berstel.
\newblock Axel {T}hue's papers on repetitions in words: a translation.
\newblock {\em Publications du LaCIM}, 20, 1995.
\newblock Universit\'{e} du Qu\'{e}bec \`a Montr\'{e}al.

\bibitem{BGKNP07}
B.~Bre{\v{s}}ar, J.~Grytczuk, S.~Klav{\v{z}}ar, S.~Niwczyk, and I.~Peterin.
\newblock Nonrepetitive colorings of trees.
\newblock {\em Discrete Math.}, 307(2):163--172, 2007.

\bibitem{Diestel}
R.~Diestel.
\newblock {\em Graph theory}, volume 173 of {\em Graduate Texts in
  Mathematics}.
\newblock Springer, Heidelberg, fourth edition, 2010.

\bibitem{DFJW}
V.~Dujmovi\'{c}, F.~Frati, G.~Joret, and D.~R. Wood.
\newblock Nonrepetitive colourings of planar graphs with ${O}(\log n)$ colours.
\newblock {\em Electronic J. Combinatorics}, 20(1):P51, 2013.
\newblock \href{http://arxiv.org/abs/1202.1569}{arXiv:1202.1569}.

\bibitem{DJKW}
V.~Dujmovi\'c, G.~Joret, J.~Kozik, and D.~R. Wood.
\newblock Nonrepetitive colouring via entropy compression.
\newblock to appear in \emph{Combinatorica},
  \href{http://arxiv.org/abs/1112.5524}{arXiv:1112.5524}.

\bibitem{FOOZ11}
F.~Fiorenzi, P.~Ochem, P.~Ossona~de Mendez, and X.~Zhu.
\newblock Thue choosability of trees.
\newblock {\em Discrete Appl. Math.}, 159(17):2045--2049, 2011.

\bibitem{Grytczuk}
J.~Grytczuk.
\newblock Nonrepetitive graph coloring.
\newblock In {\em Graph Theory in Paris}, Trends in Mathematics, pages
  209--218. Birkhauser, 2007.

\bibitem{Gry07b}
J.~Grytczuk.
\newblock Pattern avoidance on graphs.
\newblock {\em Discrete Math.}, 307(11-12):1341--1346, 2007.

\bibitem{GKM}
J.~Grytczuk, J.~Kozik, and P.~Micek.
\newblock A new approach to nonrepetitive sequences.
\newblock {\em Random Structures Algorithms}, 42(2):214--225, 2013.
\newblock \href{http://arxiv.org/abs/1103.3809}{arXiv:1103.3809}.

\bibitem{GPZ}
J.~Grytczuk, J.~Przyby{\l}o, and X.~Zhu.
\newblock Nonrepetitive list colourings of paths.
\newblock {\em Random Structures Algorithms}, 38(1-2):162--173, 2011.

\bibitem{HJ-DM11}
J.~Harant and S.~Jendrol'.
\newblock Nonrepetitive vertex colorings of graphs.
\newblock {\em Discrete Math.}, 312(2):374--380, 2012.

\bibitem{KM13}
J.~Kozik and P.~Micek.
\newblock Nonrepetitive choice number of trees.
\newblock {\em SIAM J. Discrete Math.}, 27(1):436--446, 2013.
\newblock \href{http://arxiv.org/abs/1207.5155}{arXiv:1207.5155}.

\bibitem{KP08}
A.~K{\"u}ndgen and M.~J. Pelsmajer.
\newblock Nonrepetitive colorings of graphs of bounded tree-width.
\newblock {\em Discrete Math.}, 308(19):4473--4478, 2008.

\bibitem{MT10}
R.~A. Moser and G.~Tardos.
\newblock A constructive proof of the general {L}ov\'asz {L}ocal {L}emma.
\newblock {\em J. ACM}, 57(2):Art. 11, 15, 2010.
\newblock \href{http://arxiv.org/abs/0903.0544}{arXiv:0903.0544}.

\bibitem{Thu06}
A.~Thue.
\newblock \"{U}ber unendliche {Z}eichenreichen.
\newblock {\em Norske Vid. Selsk. Skr., I Mat. Nat. Kl., Christiania}, 1--22, 1906.

\end{thebibliography}

%%below: bbl file pasted, as requested in the E-JC author guidelines 

\end{document}